%
\documentclass[12pt, reqno]{amsart}

\usepackage{amsmath, amsthm, amscd, amsfonts, amssymb, graphicx, color}
\usepackage{bbold}
\usepackage[bookmarksnumbered, colorlinks, plainpages]{hyperref}
\hypersetup{colorlinks=true,linkcolor=red, anchorcolor=green, citecolor=cyan, urlcolor=red, filecolor=magenta, pdftoolbar=true}

\textheight 22.5truecm \textwidth 14.5truecm
\setlength{\oddsidemargin}{0.35in}\setlength{\evensidemargin}{0.35in}

\setlength{\topmargin}{-.5cm}

\newtheorem{theorem}{Theorem}[section]
\newtheorem{lemma}[theorem]{Lemma}
\newtheorem{proposition}[theorem]{Proposition}

\theoremstyle{definition}
\newtheorem{definition}[theorem]{Definition}
\newtheorem{example}[theorem]{Example}

\theoremstyle{remark}
\newtheorem{remark}[theorem]{Remark}
\numberwithin{equation}{section}

\begin{document}

\setcounter{page}{1}

\title[Nonlinear elliptic equations with variable exponents]{Nonlinear elliptic equations with variable exponents satisfying Cerami condition}

\author[O. Benslimane, A. Aberqi and J. Bennouna]{Omar Benslimane$^1$$^{*}$ Ahmed Aberqi$^2$ \MakeLowercase{and} Jaouad Bennouna$^1$}

\address{$^{1}$Sidi Mohamed Ben Abdellah University, Faculty of Sciences Dhar Al Mahraz, Department of Mathematics, B.P 1796 Atlas Fez, Morocco.}
\email{\textcolor[rgb]{0.00,0.00,0.84}{omar.benslimane@usmba.ac.ma}}
\email{\textcolor[rgb]{0.00,0.00,0.84}{jbennouna@hotmail.fr}}

\address{$^{2}$Sidi Mohamed Ben Abdellah University, National School of Applied Sciences Fez, Morocco.}
\email{\textcolor[rgb]{0.00,0.00,0.84}{aberqi\_ahmed@yahoo.fr}}



\subjclass[2010]{Primary 35J60; Secondary 58J05.}

\keywords{Elliptic equation, weak solution, Cerami condition, Sobolev-Orlicz Riemannian manifold with variable exponants}

\date{Received: xxxxxx; Accepted: zzzzzz.
\newline \indent $^{*}$Corresponding author}

\begin{abstract}
We are concerned with the study of the existence and multiplicity of solutions for Dirichlet boundary value problems involving the $( p( x ), \, q( x ) )-$ equation and the nonlinearity is superlinear but does not satisfy the usual Ambrossetti-Rabinowitz condition in the framework of Sobolev spaces with variable exponents in Complete manifolds. The main results are established by means of the mountain pass theorem and Fountain theorem with Cerami condition. Moreover, we are giving an example of a $( p( x ), \, q( x ) )$ equation that verifies all our demonstrated results.
\end{abstract} \maketitle

\section{Introduction}

Let $ ( M, g ) $ be a complete non-compact Riemannian manifold, we consider the following problem
$$ ( \mathcal{P} ) \, \begin{cases}
- \, \Delta_{p( x )} u( x ) - \Delta_{q( x )} u( x ) = \, f( x, u( x ) )  & \text{in \,\,M }, \\[0.3cm]
\, u \,  = \, 0  & \text{on \,$\partial$M },
\end{cases} $$ 
where the variable exponents $p, \, q \in \overline{M} \rightarrow ( 1, \, + \infty)$ are continuous functions, satisfy the following assumption:
\begin{equation}\label{0}
1 < q^{-} \leq q^{+} < p^{-} \leq p^{+} < \infty,
\end{equation}
with $$ q^{+} = \sup_{x \in \overline{M}} q( x ), \,\, q^{-} = \inf_{x \in \overline{M}} q( x ), \,\, p^{+} = \sup_{x \in \overline{M}} p( x ), \,\, \mbox{and} \,\, p^{-} = \inf_{x \in \overline{M}} p( x ).$$ The operator $\Delta_{p( x )} u( x ) = - \mbox{div} ( |\, \nabla u( x )\,|^{p( x ) - 2} \nabla u( x ) ) $ and $\Delta_{q( x )} u( x ) = \\- \mbox{div} ( |\, \nabla u( x )\,|^{q( x ) - 2} \nabla u( x ) ) $ are called the $p( x )-$Laplacian and $q( x )-$Laplacian in $( M, \, g )$.\\
Before giving our hypotheses on the nonlinearity $f.$ We first define the family of functions
$$\mathcal{F} = \{ \, G_{\lambda} ( x, t ) = f( x, t ) t - \lambda F( x, t ), \,\, \lambda \in [ 2q^{-}, \, 2p^{+} ] \, \},$$
where $F( x, t ) = \displaystyle\int_{0}^{t} f( x, s )\,\, ds$. Note that when $p( x ) = p$ is constant, the family $\mathcal{F}$ consists of only one element, that is, 
$$\mathcal{F} = \{\, G_{p} ( x, t ) = f( x, t )t - p\, F( x, t ) \,\}.$$
Throughout this paper, we assume the following hypotheses on the Carathéodory function $ f: M \times \mathbb{R} \rightarrow \mathbb{R}$
\begin{itemize}
\item[($f_{1})$:] $|\, f( x, t )\,| \leq c_{1} + c_{2} \,|\, t\,|^{r( x ) -1}$ for all $( x, t ) \in M \times \mathbb{R}$ 
where $r: \overline{M} \rightarrow (1, \, +\infty )$ is a bounded continuous function such that $ r( x ) < q^{*} ( x ) = \frac{N\, q( x )}{N - q( x )}$ for any $ x \in \overline{M}$.
\item[($f_{2})$:] $\displaystyle\lim_{|\,t\,| \rightarrow \infty} \frac{f( x, t ) \, t}{|\, t\,|^{p^{+}}} = +\infty$ uniformly for a.e $ x \in M$.
\item[($f_{3})$:] $ f( x, t ) = 0 ( |\, t\,|^{p^{+} - 1} )$ as $ t \rightarrow 0$ uniformly for $ x \in M $.
\item[($f_{4})$:] There exists a constant $ \theta \geq 1 $ such that for any $ \beta \in [ 0, \, 1], \, t \in \mathbb{R}$ and for all $ G_{\lambda} \in \mathcal{F}, \, \eta \in [ 2q^{-}, \, 2p^{+} ]$ the inequality  $$ \theta \, G_{\lambda} ( x, t ) \geq G_{\eta} ( x, \beta t ) \,\, \mbox{holds for a.e} \,\, x \in M$$
\item[($f_{5})$:] $ f( x, -t ) = - f( x, t ) $ for all $( x, t ) \in M \times \mathbb{R}.$  
\end{itemize}

There are many papers done by authors in this context, and we start by a pioneer work for multiplicity results with superlinear nonlinearities was published in the classical Sobolev space when $ p( x ) = p = \mbox{cste}$ by Liu and Li in \cite{liu2003infinitely} used the critical point theory with Cerami condition which is weaker than the Palais-Smale condition. For more results, we refer the reader to \cite{aberqi2019existence, aberqi2017nonlinear, benslimane2020existence1, benslimane2020existence, benslimane2020existence2, benslimane2020existence3} and the references therein.\\
In Sobolev space with variable exponent Zhang and Zhao in \cite{zhang2015existence} proved the existence of strong solutions of the following $p( x )-$Laplacian Dirichlet problem via critical point theory:
$$\begin{cases}
- \, \mbox{div} ( | \nabla u |^{p( x ) - 2} \nabla u = \, f( x, u( x ) )  & \text{in \,\,$\Omega$ }, \\[0.3cm]
\, u \,  = \, 0  & \text{on \,$\partial \Omega$},
\end{cases} $$ 
and they give a new growth condition, under which, they used a new method to check the Cerami compactness condition. Hence, they proved the existence of strong solutions to the problem as above without the growth condition of the well-known Ambrossetti-Rabinowitz type and also they give some results about the multiplicity of the solutions.\\
In \cite{hurtado2018existence} and using the variational methods, the authors established the existence and multiplicity of weak solutions for a general class of quasilinear problems involving $p( . )-$Laplace type operators, with Dirichlet boundary conditions involving variable exponents without Ambrossetti and Rabinowitz type growth conditions, namely 
$$\begin{cases}
- \, \mbox{div} (a( | \nabla u |^{p( x )} ) | \nabla u |^{p( x ) - 2} \nabla u = \, \lambda\, f( x, u( x ) )  & \text{in \,\,$\Omega$ }, \\[0.3cm]
\, u \,  = \, 0  & \text{on \,$\partial \Omega$},
\end{cases} $$ 
and by different types of versions of the Mountain Pass Theorem with Cerami condition, as well as, the Fountain and Dual Theorem with Cerami condition, they obtained some existence of weak solutions for the above problem under some considerations. Moreover, they have shown that the problem treated has at least one non-trivial solution for any parameter $\lambda > 0,$ small enough, and also that the solution blows up, in the Sobolev norm, as $ \lambda \rightarrow 0^{+}.$ Finally, by suitable hypotheses on the nonlinearity $f( x, . )$ they get the existence of infinitely many weak solutions by using the Genus Theory introduced by Krasnoselskii. For a deeper comprehension, we refer the reader to \cite{ragusa2019regularity, guo2015dirichlet, zang2008p} and the references therein. As applications, we cite for examples: the study of fluid filtration in porous media, constrained heating, elastoplasticity, optimal control, financial mathematics, and others, see \cite{chen2006variable, gwiazda2008non, ruuvzivcka2004modeling, zhikov2004density} and the references therein.\\

To our knowledge, the results presented here are new, and they complement and improve the ones obtained in \cite{zang2008p, zhang2015existence, hurtado2018existence} because we are considering the general framework of Sobolev spaces with variable exponents in Complete manifolds and nonlinearities is superlinear but does not satisfy the usual Ambrossetti-Rabinowitz condition. However, we address the challenges presented by the fact that the $p(x)-$ laplacian and $ q( x )-$ laplacian operators possess more complicated nonlinearities thanthe  p-laplacian and q-laplacian operator, due to the fact that $\Delta_{p( x )}$ and $ \Delta_{q( x )}$ are not homogeneous. Moreover, we can not use Lagrange Multiplier Theorem in many problems involving this operator, which shows that our problem has more difficulty than the operators p-Laplace type.\\\\

The remainder of the paper is organized as follows. In section \ref{sec2} we will recall the definitions and some properties of Sobolev spaces with variable exponents and Sobolev spaces with variable exponents in complete manifolds. The readers can consult the following papers \cite{aubin1982nonlinear, benslimane2020existence1, benslimane2020existence, benslimane2020existence2, benslimane2020existence3, hebey2000nonlinear, trudinger1968remarks} for details. In section \ref{sec3}, using the mountain pass theorem with Cerami condition, we prove the existence of non-trivial weak solutions of problem $( \mathcal{P} ).$ Moreover, Using the Fountain theorem with Cerami condition, we demonstrate that the problem $( \mathcal{P} )$ has infinitely many (pairs) of solutions with unbounded energy.

\section{Notations and Basic Properties}\label{sec2}
In order to discuss the problem $( \mathcal{P} )$, we need some facts on spaces $W_{0}^{1, q( x )} ( \Omega ) $ where $\Omega$ is an open subset of $ \mathbb{R}^{N}$ and $ W_{0}^{1, q( x )} ( M )$ which are called the Sobolev spaces with variable exponents and the Sobolev spaces with variable exponents in complete manifolds setting. For this reason, we will recall some properties involving the above spaces, which can be found in \cite{aubin1982nonlinear, benslimane2020existence1, gaczkowski2016sobolev, hebey2000nonlinear, guo2015dirichlet} and references therein.
\subsection{Sobolev spaces with variable exponents}
Let $ \Omega$ be a bounded open subset of $ \mathbb{R}^{N} \, ( N \geq 2 )$, we define the Lebesgue space with variable exponent $ L^{q(.)} ( \Omega )$ as the set of all measurable function $ u : \Omega \longmapsto \mathbb{R} $ for which the convex modular
$$ \rho _{q(.)} ( u ) = \int_{\Omega} |\, u( x )\,|^{q( x )} \,\, dx, $$ is finite. If the exponent is bounded, i.e if $q^{+} = ess \,sup \{ \, q( x ) / x \in \Omega \, \} < + \infty,$ then the expression $$ ||\, u \,||_{q(.)} = \inf \{ \, \lambda > 0: \, \rho_{q(.)} \bigg( \frac{u}{\lambda} \bigg) \leq 1 \, \}, $$ defines a norm in $ L^{q(.)} ( \Omega )$, called the Luxemburg norm.\\ The space $ ( L^{q(.)} ( \Omega ), \, ||\,.\,||_{q(.)} )$ is a separable Banach space. Moreover, if $ 1 < q^{-} \leq q^{+} < +\infty,$ then $ L^{q(.)} ( \Omega )$ is uniformly convex, where $ q^{-} = ess\, inf \{ \, q( x ) / x \in \Omega \, \},$ hence reflexive, and its dual space is isomorphic to $ L^{q^{'}(.)} ( \Omega )$ where $ \frac{1}{q( x )} + \frac{1}{q^{'} ( x )} = 1.$\\
Finally, we have the Hölder type inequality:
$$ \bigg|\, \int_{\Omega} u\, v \,\, dx \, \bigg| \leq \bigg( \, \frac{1}{q^{-}} + \frac{1}{( q^{'} )^{-}} \bigg) \, ||\, u\,||_{q(.)} ||\, v\,||_{q^{'} (.)},$$ for all $ u \in L^{q(.)} ( \Omega )$ and $ v \in L^{q^{'}(.)} ( \Omega ).$ \\

We define the variable exponents Sobolev space by
$$ W^{1, q( x )} ( \Omega ) = \{ \, u \in L^{q( x )} ( \Omega ) \,\, \mbox{and} \,\, |\, \nabla u\,| \in L^{q( x )} ( \Omega ) \,\},$$
with the norm $$ ||\, u\,||_{W^{1, q( x )} ( \Omega )} = ||\, u\,||_{L^{q( x )} ( \Omega )} + ||\, \nabla u \||_{L^{q( x )} ( \Omega )} \,\, \forall u \in W^{1, q( x )} ( \Omega ).$$
We denote by $ W^{1, q( x )}_{0} ( \Omega ) $ the closure of $C_{0}^{\infty} ( \Omega ) $ in $ W^{1, q( x )} ( \Omega ),$ and we define the Sobolev exponent by $ q^{*} ( x ) = \frac{N \, q( x )}{N - q( x )} $ for $ q( x ) < N.$

\subsection{Sobolev spaces with variable exponents in complete manifolds}
\begin{definition}
Let $ ( M, g ) $ be a smooth Riemannain manifolds and let $ \nabla $ be the Levi-Civita connection. If $u$ is a smooth function on $M$, then $ \nabla^{k} u $ denotes the $k-$th covariant derivative of $u$, and $ | \, \nabla^{k} u \, | $ the norm of $ \nabla^{k} u $ defined in local coordinates by
$$ | \, \nabla^{k} u \, |^{2} = g^{i_{1} j_{1}} \cdots g^{i_{k} j_{k}} \, ( \nabla^{k} u )_{i_{1} \cdots i_{k}} \, ( \nabla^{k} u )_{j_{1} \cdots j_{k}} $$
where Einstein's convention is used.
\end{definition}
\begin{definition}
To define variable Sobolev spaces, given a variable exponent $q$ in $ \mathcal{P} ( M ) $ ( the set of all measurable functions $p(.) : M \rightarrow [ 1, \infty ]$ ) and a natural number $k$, introduce 
$$ C^{q(.)}_{k} ( M ) = \{ \, u \in C^{\infty} ( M ) \,\, \mbox{such that } \,\, \forall j \,\, 0 \leq j \leq k \,\, | \, \nabla^{k} u \, | \in L^{q( . ) } ( M )\, \}. $$
On $ C^{q( . )}_{k} ( M ) $ define the norm 
$$ || \, u \, ||_{L^{q( . )}_{k}} = \sum_{j = 0}^{k} || \, \nabla^{j} u \, ||_{L^{q( . )}}. $$
\end{definition}
\begin{definition}
The Sobolev spaces $ L_{k}^{q( . )} ( M ) $ is the completion of $ C^{q(.)}_{k} ( M ) $ with respect to the norm $ || \, u \, ||_{L^{q( . )}_{k}}$.
\end{definition}
\begin{definition}
Given $ ( M, g ) $ a smooth Riemannian manifold, and $ \gamma : \, [\, a, \, b \, ] \longrightarrow M $ a curve of class $ C^{1} $. The length of $ \gamma $ is 
$$ l( \gamma ) = \int_{a}^{b} \sqrt{g \, ( \, \frac{d \gamma }{d t }, \, \frac{d \gamma}{d t}\, )} \,\,dt, $$
and for a pair of points $ x, \, y \in M$, we define the distance $ d_{g} ( x, y ) $ between $x$ and $y$ by 
$$ d_{g} ( x, y ) = \inf \, \{ \, l( \gamma ) : \, \gamma: \, [ \, a, \, b \,] \rightarrow M \,\, \mbox{such that} \,\, \gamma ( a ) = x \,\, \mbox{and} \,\, \gamma ( b ) = y \, \}. $$
\end{definition}
\begin{definition}
A function $ s: \, M \longrightarrow \mathbb{R} $ is log-Hölder continuous if there exists a constant $c$ such that for every pair of points $ \{ x, \, y \} $ in $ M$ we have
$$ | \, s( x ) - s( y ) \, | \leq \frac{c}{log ( e + \frac{1}{d_{g} ( x, y )} \, )}. $$
We note by $ \mathcal{P}^{log} ( M ) $ the set of log-Hölder continuous variable exponents. The relation between $ \mathcal{P}^{log} ( M ) $ and $ \mathcal{P}^{log} ( \mathbb{R}^{N} ) $ is the following: 
\end{definition}
\begin{proposition} \cite{aubin1982nonlinear, gaczkowski2016sobolev}
Let $ q \in \mathcal{P}^{log} ( M ) $, and let $ ( \Omega, \phi ) $ be a chart such that 
$$ \frac{1}{2} \delta_{i j } \leq g_{i j} \leq 2 \, \delta_{i j } $$
as bilinear forms, where $ \delta_{i j} $ is the delta Kronecker symbol. Then $ qo\phi^{-1} \in \mathcal{P}^{log} ( \phi ( \Omega ) ).$
\end{proposition}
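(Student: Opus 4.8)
The plan is to transfer the log-Hölder modulus from the Riemannian distance $d_g$ to the Euclidean distance on $\phi(\Omega)$, the bridge being the bi-Lipschitz comparison of these two distances provided by the hypothesis $\tfrac12\delta_{ij}\le g_{ij}\le 2\delta_{ij}$. Unwinding the definition, it suffices to exhibit a constant $c'>0$ with $|q(x)-q(y)|\le c'\big/\log\!\big(e+\tfrac{1}{|\phi(x)-\phi(y)|}\big)$ for all $x,y\in\Omega$, where I abbreviate $\xi=\phi(x)$, $\zeta=\phi(y)$. Since the log-Hölder hypothesis on $q$ already gives $|q(x)-q(y)|\le c\big/\log\!\big(e+\tfrac{1}{d_g(x,y)}\big)$, the whole matter reduces to controlling $d_g(x,y)$ from above by a multiple of $|\xi-\zeta|$.

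The key step is the distance comparison. From $g_{ij}\le 2\delta_{ij}$ as bilinear forms one gets $g(v,v)\le 2|v|^{2}$ for every tangent vector $v$ expressed in the coordinates of $\phi$, hence $l(\gamma)\le\sqrt2\,l_{\mathrm{eucl}}(\gamma)$ for every $C^{1}$ curve $\gamma$ contained in $\Omega$. Applying this to the $\phi$-preimage of the straight segment $[\xi,\zeta]$ — which lies in $\phi(\Omega)$ once the chart is taken with convex image (e.g.\ a ball), as one always may, or after passing to a convex subdomain — produces an explicit admissible curve joining $x$ to $y$ in $M$, so that
$$ d_g(x,y)\;\le\; l(\gamma)\;\le\;\sqrt2\,|\xi-\zeta|\;=\;\sqrt2\,|\phi(x)-\phi(y)|. $$
The opposite inequality $|\phi(x)-\phi(y)|\le\sqrt2\,d_g(x,y)$ follows symmetrically from $g_{ij}\ge\tfrac12\delta_{ij}$, but is not needed here. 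I expect this to be the only genuinely delicate point: $d_g$ is the \emph{global} distance on $M$ whereas the metric control is available only inside the chart, so one must bound $d_g$ by the length of a concrete competitor curve rather than reason through a global minimizer — which is exactly why the convexity of $\phi(\Omega)$ enters.

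Finally I would feed this into the modulus. The function $t\mapsto 1/\log(e+1/t)$ is increasing on $(0,\infty)$ and $\log(e+1/t)>1$ always; combining these with the elementary chain
$$ \log\!\Big(e+\tfrac1t\Big)\le\log\!\Big(\sqrt2\big(e+\tfrac{1}{\sqrt2\,t}\big)\Big)\le\log\sqrt2+\log\!\Big(e+\tfrac{1}{\sqrt2\,t}\Big)\le\big(1+\log\sqrt2\big)\log\!\Big(e+\tfrac{1}{\sqrt2\,t}\Big) $$
yields $\dfrac{1}{\log(e+\frac{1}{\sqrt2\,t})}\le\dfrac{1+\log\sqrt2}{\log(e+\frac{1}{t})}$. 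Taking $t=|\phi(x)-\phi(y)|$ and using $d_g(x,y)\le\sqrt2\,t$ together with the monotonicity above,
$$ |q(x)-q(y)|\;\le\;\frac{c}{\log\!\big(e+\frac{1}{d_g(x,y)}\big)}\;\le\;\frac{c}{\log\!\big(e+\frac{1}{\sqrt2\,|\phi(x)-\phi(y)|}\big)}\;\le\;\frac{c\,(1+\log\sqrt2)}{\log\!\big(e+\frac{1}{|\phi(x)-\phi(y)|}\big)}, $$
so $q\circ\phi^{-1}\in\mathcal{P}^{log}(\phi(\Omega))$ with constant $c'=c\,(1+\tfrac12\log 2)$, which completes the argument.
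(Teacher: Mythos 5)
The paper offers no proof of this proposition at all; it is imported verbatim from the cited sources \cite{aubin1982nonlinear, gaczkowski2016sobolev}, so there is no in-paper argument to compare yours against. That said, your proof is the standard one and is essentially correct: you pick the right direction for the distance comparison (an \emph{upper} bound $d_g(x,y)\le\sqrt2\,|\phi(x)-\phi(y)|$, coming from $g_{ij}\le 2\delta_{ij}$, is what is needed because $t\mapsto 1/\log(e+1/t)$ is increasing — using the lower metric bound here would be the classic mistake, and you avoid it), and the elementary estimate $\log(e+\tfrac1t)\le(1+\log\sqrt2)\log(e+\tfrac{1}{\sqrt2\,t})$ is justified correctly via $\log(e+\tfrac{1}{\sqrt2\,t})\ge 1$. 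The one genuine soft spot is the one you flag yourself: the bound $d_g(x,y)\le\sqrt2\,|\phi(x)-\phi(y)|$ requires the segment $[\phi(x),\phi(y)]$ to lie in $\phi(\Omega)$, i.e.\ convexity of the chart image, which is not among the stated hypotheses. ``Passing to a convex subdomain'' proves only a weaker statement (the log-H\"older bound on that subdomain), and without some such assumption the inequality can fail, since $d_g$ is the \emph{global} distance on $M$ while the metric control is available only inside $\Omega$; a non-convex chart image can put two points at small Euclidean separation while every competitor curve one can actually estimate stays long. If the proposition is read, as in the cited sources and as in the way such charts are produced from bounded geometry, for charts whose image is a ball, your argument is complete as written.
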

\begin{definition}
We say that the n-manifold $ ( M, g ) $ has property $ B_{vol} ( \lambda, v ) $ if its geometry is bounded in the following sense:\\
$ \hspace*{1cm} \bullet \,\, \mbox{The Ricci tensor of g noted by Rc ( g ) verify,} \,\,Rc ( g ) \geq \lambda ( n - 1 ) \, g $ for some $ \lambda. $\\
$ \hspace*{1cm} \bullet  $ There exists some $ v > 0 $ such that $ | \, B_{1} ( x ) \, |_{g} \geq v \,\, \forall x \in M,$ where $B_{1} ( x ) $ are the balls of radius 1 centered at some point $x$ in terms of the volume of smaller concentric balls.
\end{definition}
\begin{proposition} \cite{aubin1982nonlinear,hebey2000nonlinear} \label{prop2}
Let $ ( M, g ) $ be a complete Riemannian n-manifold. Then, if the embedding $ L^{1}_{1} ( M ) \hookrightarrow L^{\frac{n}{n - 1}} ( M )$ holds, then whenever the real numbers $q$ and $p$ satisfy $$ 1 \leq q < n, $$ and $$ q \leq p \leq q* = \frac{n q}{n - q}, $$ the embedding $ L^{q}_{1} ( M ) \hookrightarrow L^{p} ( M ) $ also holds.
\end{proposition}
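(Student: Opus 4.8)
The plan is to derive everything from the single endpoint case $L^q_1(M)\hookrightarrow L^{q^*}(M)$ and then recover the whole range $q\le p\le q^*$ by interpolation. Fix $q$ with $1\le q<n$ and set $\gamma=\frac{q(n-1)}{n-q}$; an elementary computation gives the identities $\frac{\gamma n}{n-1}=q^*$ and $(\gamma-1)\frac{q}{q-1}=q^*$, and also $q\le\gamma<q^*$. The strategy is the classical one of feeding a power of $u$ into the hypothesis. Given a smooth $u$ with $u,\nabla u\in L^q(M)$ and a truncation parameter $k>0$, I would apply the assumed inequality $\|v\|_{n/(n-1)}\le C\|v\|_{L^1_1}$ to $v=(|u|\wedge k)^{\gamma}$; this $v$ belongs to $L^1_1(M)$ because $(|u|\wedge k)^{\gamma}\le k^{\gamma-q}|u|^{q}$ and $|\nabla v|\le\gamma\,(|u|\wedge k)^{\gamma-1}|\nabla u|$, so all the integrals below are finite.

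Next I would estimate the three terms by Hölder's inequality with exponents $q$ and $q'=q/(q-1)$. Writing $w_k=|u|\wedge k$, the left-hand side equals $\|w_k\|_{q^*}^{\gamma}$ (by $\frac{\gamma n}{n-1}=q^*$); the gradient term is bounded by $C\gamma\|\nabla u\|_q\,\|w_k\|_{q^*}^{\gamma-1}$ (by $(\gamma-1)q'=q^*$); and $\int_M w_k^{\gamma}=\int_M w_k^{\gamma-1}w_k\le\|w_k\|_{q^*}^{\gamma-1}\|u\|_q$, using the same identity $(\gamma-1)q'=q^*$. The key point is that exactly the power $\|w_k\|_{q^*}^{\gamma-1}$ factors out of both terms on the right, so after dividing by it one gets $\|w_k\|_{q^*}\le C(n,q)\,\|u\|_{L^q_1}$ with a constant not depending on $k$. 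Letting $k\to\infty$ and using monotone convergence yields $u\in L^{q^*}(M)$ with $\|u\|_{q^*}\le C(n,q)\,\|u\|_{L^q_1}$; since $L^q_1(M)$ is by definition the completion of the smooth functions, a standard Cauchy-sequence argument promotes this to all of $L^q_1(M)$.

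For the remaining exponents $q\le p\le q^*$, I would write $\frac1p=\frac{1-\theta}{q}+\frac{\theta}{q^*}$ for a unique $\theta\in[0,1]$ and invoke the elementary interpolation (log-convexity) inequality $\|u\|_p\le\|u\|_q^{1-\theta}\|u\|_{q^*}^{\theta}$, valid for every $u\in L^q(M)\cap L^{q^*}(M)$. The previous step shows $L^q_1(M)\subset L^q(M)\cap L^{q^*}(M)$ with $\|u\|_{q^*}\le C\|u\|_{L^q_1}$, so this gives $\|u\|_p\le\|u\|_q^{1-\theta}\big(C\|u\|_{L^q_1}\big)^{\theta}\le C'\|u\|_{L^q_1}$, i.e.\ the continuous embedding $L^q_1(M)\hookrightarrow L^p(M)$.

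I expect the genuinely delicate point to be making the substitution ``$v=|u|^{\gamma}$'' rigorous before one knows that $u$ itself lies in $L^{q^*}(M)$: this is precisely why one works with the truncations $w_k=|u|\wedge k$ (or, to keep $v$ smooth, a mollified cutoff $F_k$ with $F_k(t)=|t|^{\gamma}$ for $|t|\le k$, $F_k(t)\le C_k|t|^q$ for $|t|\ge k$, and $|F_k'(t)|\lesssim|t|^{\gamma-1}$), and why the uniformity of the constant in $k$ before the limit is essential. Everything else — the exponent arithmetic, the two applications of Hölder, and the interpolation — is routine; note only that the constant degenerates as $q\uparrow n$, which is harmless since $q$ is held fixed.
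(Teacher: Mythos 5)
The paper offers no proof of this proposition — it is quoted directly from Aubin and Hebey — and your argument is precisely the classical one from those references: prove the endpoint $L^q_1\hookrightarrow L^{q^*}$ by applying the hypothesis to (truncations of) $|u|^{\gamma}$ with $\gamma=\tfrac{q(n-1)}{n-q}$, divide by the common factor $\|w_k\|_{q^*}^{\gamma-1}$ after checking it is finite, let $k\to\infty$, and reach intermediate $p$ by log-convexity of the $L^p$ norms (correctly so, since plain H\"older downward would fail on a manifold of infinite volume). The exponent identities and the truncation argument all check out, so the proof is correct; the only degenerate case is $q=1$, where $\gamma=1$ and the claim reduces to the hypothesis itself.
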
\label{prop5}
\begin{proposition} \cite{aubin1982nonlinear,hebey2000nonlinear}\label{prop3}
Assume that the complete n-manifold $ ( M, g ) $ has property $ B_{vol} ( \lambda, v ) $ for some $ ( \lambda, v ).$ Then there exist positive constants $ \delta_{0} = \delta_{0} ( n, \, \lambda, \, v ) $ and $ A = A ( n, \, \lambda, \, v ) $, we have, if $ R \leq \delta_{0} $, if $ x \in M $ if $ 1 \leq q \leq n $, and if $ u \in L^{q}_{1,0} ( \, B_{R} ( x ) \, ) $ the estimate 
$$ || \, u \, ||_{L^{p}} \leq A \,p \, || \, \nabla u \, ||_{L^{q}},$$ where $ \frac{1}{p} = \frac{1}{q} - \frac{1}{n}.$ 
\end{proposition}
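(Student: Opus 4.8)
The plan is to reduce the statement to its limiting case $q=1$, which is a local $L^{1}$–Sobolev (isoperimetric-type) inequality, and then to propagate it to all $1\le q<n$ by the classical power substitution, keeping careful track of the dependence of the constant on $p$.

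First I would extract from the hypothesis $B_{vol}(\lambda,v)$ the following uniform local inequality: there exist constants $\delta_{0}=\delta_{0}(n,\lambda,v)>0$ and $A_{0}=A_{0}(n,\lambda,v)>0$ such that for every $x\in M$, every $R\le\delta_{0}$, and every $u\in L^{1}_{1,0}(B_{R}(x))$,
$$\| u \|_{L^{n/(n-1)}} \le A_{0}\, \| \nabla u \|_{L^{1}}.$$
The mechanism is that the lower Ricci bound $\mathrm{Rc}(g)\ge\lambda(n-1)g$ furnishes, through Bishop–Gromov relative volume comparison, a lower control of the volume ratio $\mathrm{Vol}(B_{r}(x))/r^{n}$ on the small scale $r\le\delta_{0}$ once it is bounded below at scale $1$ by $v$; this uniform local non-collapsing, together with the volume comparison, yields a local isoperimetric inequality on small balls with a constant depending only on $n,\lambda,v$, which by the coarea formula (Federer–Fleming) is equivalent to the displayed $L^{1}$ estimate. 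This is the step that genuinely uses bounded geometry, and it is the main obstacle; everything afterward is soft.

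Next, fix $1\le q<n$, set $p=\frac{nq}{n-q}$ so that $\frac{1}{p}=\frac{1}{q}-\frac{1}{n}$, and put $\gamma=\frac{q(n-1)}{n-q}=\frac{p(n-1)}{n}$, noting $\gamma\ge 1$ with equality iff $q=1$. Apply the inequality of the previous paragraph to $v=|u|^{\gamma}$ — first for $u\in C_{0}^{\infty}(B_{R}(x))$ and then by density in $L^{q}_{1,0}(B_{R}(x))$ — using $|\nabla v|=\gamma|u|^{\gamma-1}|\nabla u|$ and Hölder's inequality with exponents $q$ and $q'=\frac{q}{q-1}$:
$$\| \nabla v \|_{L^{1}}\le \gamma\,\Big(\int_{B_{R}(x)}|u|^{(\gamma-1)q'}\Big)^{1/q'}\| \nabla u \|_{L^{q}}.$$
A direct computation gives $(\gamma-1)q'=p$ and $\gamma\cdot\frac{n}{n-1}=p$, so the left side of the $L^{1}$ inequality equals $\|u\|_{L^{p}}^{\gamma}$ and the integral factor equals $\|u\|_{L^{p}}^{p/q'}=\|u\|_{L^{p}}^{\gamma-1}$. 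Since $u$ has compact support, $\|u\|_{L^{p}}<\infty$, and we may divide by $\|u\|_{L^{p}}^{\gamma-1}$ to obtain
$$\| u \|_{L^{p}}\le A_{0}\,\gamma\,\| \nabla u \|_{L^{q}}\le A_{0}\,\tfrac{n-1}{n}\,p\,\| \nabla u \|_{L^{q}},$$
so the claim holds with $A=A_{0}$ (indeed one may take $A=\frac{n-1}{n}A_{0}$), a constant depending only on $n,\lambda,v$.

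Finally, one could shortcut part of this with the results already recorded: Proposition~\ref{prop2} converts a global embedding $L^{1}_{1}(M)\hookrightarrow L^{n/(n-1)}(M)$ into the scale of embeddings $L^{q}_{1}(M)\hookrightarrow L^{p}(M)$. What Proposition~\ref{prop3} adds on top of that is (i) the localization to balls of radius $\le\delta_{0}$ with a constant independent of the ball and its centre, and (ii) the explicit \emph{linear} growth $\sim p$ of the constant; both come out exactly as above, the linear factor being forced by the $\gamma\sim p$ produced by the substitution $v=|u|^{\gamma}$.
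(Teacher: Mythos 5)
The paper gives no proof of this proposition: it is imported verbatim from the cited monographs (it is, up to notation, Lemma~3.2 of Hebey's book), so there is nothing internal to compare against, and your argument is essentially the proof given in that reference. Your bookkeeping in the power-substitution step is correct --- with $\gamma=\frac{q(n-1)}{n-q}=\frac{p(n-1)}{n}$ one indeed has $(\gamma-1)q'=p$, $\gamma\cdot\frac{n}{n-1}=p$ and $p/q'=\gamma-1$, so dividing by $\|u\|_{L^{p}}^{\gamma-1}$ (after noting the inequality is trivial when $u\equiv 0$, and handling $q=1$ separately since then $q'=\infty$) yields $\|u\|_{L^{p}}\le A_{0}\gamma\|\nabla u\|_{L^{q}}$, and $\gamma\le p$ is exactly what produces the advertised linear constant $Ap$ with $A$ independent of $q$. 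The one place where your argument is an outline rather than a proof is the base case: the uniform local inequality $\|u\|_{L^{n/(n-1)}}\le A_{0}\|\nabla u\|_{L^{1}}$ on balls of radius at most $\delta_{0}$, with $A_{0}$ and $\delta_{0}$ depending only on $(n,\lambda,v)$, does not follow from Bishop--Gromov non-collapsing alone as your phrasing suggests; one needs Gromov's relative volume comparison combined with a genuine isoperimetric comparison theorem (Croke's inequality, or Gromov's L\'evy-type inequality) before the Federer--Fleming coarea argument applies, and that is where essentially all of the content of the proposition lies. Since the paper itself treats the result as known, deferring that step to the references is acceptable, but you should flag it as the load-bearing ingredient rather than as a routine consequence of volume comparison.
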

\begin{proposition} \cite{aubin1982nonlinear,hebey2000nonlinear, gaczkowski2016sobolev} \label{prop6}
Assume that for some $ ( \lambda, v ) $ the complete n-manifold $( M, g ) $ has property $ B_{vol} ( \lambda, v ) $. Let $ p \in \mathcal{P} ( M ) $ be uniformly continuous with $ q^{+} < n.$ Then $ L^{q( . )}_{1} ( M ) \hookrightarrow L^{p( . )} ( M ) \,\, \forall q \in \mathcal{P} ( M ) $ such that $ q \ll p \ll q* = \frac{n q}{n - q}. $ In fact, for $ || \, u \, ||_{L^{q( . )}_{1}} $ sufficiently small we have the estimate $$ \rho_{p( . )} ( u ) \leq G \, ( \, \rho_{q( . )} ( u ) + \rho_{q( . )} ( | \, \nabla u \, | ) \, ), $$ where the positive constant $G$ depend on $ n, \, \lambda, \, v, \, q $ and $ p $.
\end{proposition}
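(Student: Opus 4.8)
The plan is to prove the estimate (whence the embedding) by a localization argument: transfer the problem to a uniformly locally finite family of small geodesic balls on which property $B_{vol}(\lambda,v)$ controls the geometry, apply the constant-exponent Sobolev inequalities of Propositions \ref{prop2} and \ref{prop3} on each ball after replacing $p$ and $q$ by their local extrema, and then reassemble the resulting local modular bounds by means of a subordinate partition of unity whose gradients are uniformly bounded.

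First I would set up a good cover. Using $B_{vol}(\lambda,v)$ and Bishop--Gromov comparison — the lower Ricci bound supplies a uniform \emph{upper} bound $V_0=V_0(n,\lambda,\delta_0)$ for the volume of balls of radius $\delta_0$, while the hypothesis gives the uniform lower bound $v$ — one obtains a radius $\delta_0>0$ and points $\{x_i\}\subset M$ such that the $B_{\delta_0/2}(x_i)$ are pairwise disjoint, the $B_{\delta_0}(x_i)$ cover $M$, and this covering has multiplicity bounded by an integer $N_0=N_0(n,\lambda,v)$ (packing: any point meets at most $V_0/v$ of the $B_{\delta_0}(x_i)$, whose closures are compact since $M$ is complete). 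One then fixes a partition of unity $\{\alpha_i\}$ subordinate to $\{B_{\delta_0}(x_i)\}$ with $0\le\alpha_i\le1$, $\sum_i\alpha_i\equiv1$ and $|\nabla\alpha_i|\le H$ uniformly, again possible because the geometry is uniformly bounded. Shrinking $\delta_0$ and using the uniform continuity of $p$ and $q$, one arranges that on each $B_i:=B_{\delta_0}(x_i)$ the constants $\underline q_i:=\inf_{B_i}q$ and $\overline p_i:=\sup_{B_i}p$ satisfy $1<q^-\le\underline q_i<n$, $\underline q_i\le\overline p_i<\underline q_i^{\,*}=\tfrac{n\underline q_i}{n-\underline q_i}$ and $\sup_{B_i}q\le\inf_{B_i}p$, uniformly in $i$; this is where $q^{+}<n$ and the gap conditions $q\ll p\ll q^{*}$ are used.

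The heart of the matter is the local modular estimate. Fix $i$ and $u\in L^{q(.)}_1(M)$. Since $B_i$ has finite measure $\le V_0$ and $\underline q_i\le q(\cdot)$ on $B_i$, both $u$ and $|\nabla u|$ lie in $L^{\underline q_i}(B_i)$, so $\alpha_i u\in L^{\underline q_i}_{1,0}(B_i)$; applying Proposition \ref{prop3} to $\alpha_i u$, then Hölder on the finite-measure ball to descend from the Sobolev conjugate $\underline q_i^{\,*}$ to $\overline p_i$, together with the product rule $\nabla(\alpha_i u)=\alpha_i\nabla u+u\nabla\alpha_i$, yields $\|\alpha_i u\|_{L^{\overline p_i}(B_i)}\le C\bigl(\|u\|_{L^{\underline q_i}(B_i)}+\|\nabla u\|_{L^{\underline q_i}(B_i)}\bigr)$ with $C$ independent of $i$. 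I would then convert Luxemburg norms into modulars on $B_i$: splitting $\{|\alpha_i u|\ge1\}$ from its complement and using $p(x)\le\overline p_i$ on the first set, $p(x)\ge q(x)$ on the second, gives $\rho_{p(.),B_i}(\alpha_i u)\le\rho_{\overline p_i,B_i}(\alpha_i u)+\rho_{q(.),B_i}(u)$; conversely, \emph{assuming $\|u\|_{L^{q(.)}_1}$ so small that every modular on $M$, hence on each $B_i$, is $\le1$}, one has $\|u\|_{L^{\underline q_i}(B_i)}\le c(V_0)\,\rho_{q(.),B_i}(u)^{1/\sup_{B_i}q}$ and likewise for $\nabla u$, and absorbs powers using $\overline p_i/\sup_{B_i}q\ge1$ and $\rho_{\overline p_i,B_i}(\alpha_i u)=\|\alpha_i u\|_{L^{\overline p_i}(B_i)}^{\overline p_i}$. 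Combining these elementary inequalities with the displayed Sobolev bound gives, for $\|u\|_{L^{q(.)}_1}$ small, $\rho_{p(.),B_i}(\alpha_i u)\le C_0\bigl(\rho_{q(.),B_i}(u)+\rho_{q(.),B_i}(|\nabla u|)\bigr)$ with $C_0=C_0(n,\lambda,v,q,p)$ uniform in $i$.

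To conclude, I would reassemble: since at most $N_0$ of the $\alpha_i$ are nonzero at any point, convexity of $t\mapsto t^{p(x)}$ gives $|u|^{p(x)}\le N_0^{p^{+}-1}\sum_i|\alpha_i u|^{p(x)}$ pointwise, hence $\rho_{p(.)}(u)\le N_0^{p^{+}-1}\sum_i\rho_{p(.),B_i}(\alpha_i u)$, while bounded multiplicity gives $\sum_i\rho_{q(.),B_i}(w)\le N_0\,\rho_{q(.)}(w)$ for $w\in\{u,|\nabla u|\}$. Feeding the local estimate into these two inequalities yields $\rho_{p(.)}(u)\le G\bigl(\rho_{q(.)}(u)+\rho_{q(.)}(|\nabla u|)\bigr)$ with $G=N_0^{p^{+}}C_0$ for $\|u\|_{L^{q(.)}_1}$ small, which is the asserted estimate; the continuous embedding $L^{q(.)}_1(M)\hookrightarrow L^{p(.)}(M)$ then follows by the usual rescaling (apply the estimate to $u/\lambda$ with $\lambda$ large and use the norm--modular relations). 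I expect the main obstacle to be the norm-to-modular conversion of the third step: keeping every constant uniform in $i$ while moving back and forth between the Luxemburg norm and the non-homogeneous modular and while replacing the variable exponents by their local extrema — exactly where the uniform continuity of $p,q$, the gap conditions, the volume bound $V_0$ from $B_{vol}(\lambda,v)$, and the smallness of $\|u\|_{L^{q(.)}_1}$ must be used together; without smallness one gets only an inequality with an additive term whose sum over the (infinitely many) balls need not converge. A secondary point is the construction, again from $B_{vol}(\lambda,v)$, of a uniformly locally finite cover carrying a partition of unity with uniformly bounded gradients.
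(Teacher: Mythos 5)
The paper itself gives no proof of this proposition: it is imported from the cited references, and in particular the modular estimate is exactly the embedding theorem of Gaczkowski--G\'orka--Pons, which is proved there by precisely the localization scheme you describe (uniformly locally finite cover of bounded multiplicity from Bishop--Gromov under $B_{vol}(\lambda,v)$, the local constant-exponent Sobolev inequality of Proposition \ref{prop3} on each small ball with the exponents frozen at their local extrema, norm-to-modular conversion using the smallness of $\|u\|_{L^{q(.)}_1}$, and resummation via bounded multiplicity). Your proposal is therefore correct and essentially coincides with the source's argument; the only point worth tightening is that $B_{vol}(\lambda,v)$ alone yields a Lipschitz, not smooth, partition of unity with uniformly bounded gradients (built from distance functions), which is all the argument needs.
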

\begin{proposition}\label{prop7}
Let $ u \in L^{q( x )} ( M ), \,\{ \, u_{k} \,\} \subset L^{q( x )} ( M ), \, k \in \mathbb{N},$ then we have 
\begin{enumerate}
\item[(i)]  $|| u ||_{q( x )} < 1 \,\,\mbox{( resp. = 1, $>$ 1 )} \iff \rho_{q( x )} ( u ) < 1 \,\,\mbox{( resp. = 1, $>$ 1 )},$
\item[(ii)]For $ u \in L^{q( x )} ( M ) \backslash \{ 0 \}, \,\, || u ||_{q( x )} = \lambda \Longleftrightarrow \rho_{q( x )} \big( \frac{u}{\lambda} \big) = 1.$
\item[(iii)]  $ || u ||_{q( x )} < 1 \Rightarrow || u ||_{q( x )}^{q^{+}} \leq \rho_{q( x )} ( u ) \leq || u ||_{q( x )}^{q^{-}},$
\item[(iv)]  $ || u ||_{q( x )} > 1 \Rightarrow || u ||_{q( x )}^{q^{-}} \leq \rho_{q( x )} ( u ) \leq || u ||_{q( x )}^{q^{+}},$
\item[(v)] $ \lim_{k \rightarrow + \infty} || u_{k} - u ||_{q( x )} = 0 \iff \lim_{k \rightarrow + \infty} \rho_{q( x )} ( u_{k} - u ) = 0. $
\end{enumerate}
\end{proposition}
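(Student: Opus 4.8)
The plan is to derive all five items from elementary properties of the convex modular $\rho_{q(x)}$, and in particular from the behaviour of the one-variable function $\varphi_u(\lambda):=\rho_{q(x)}(u/\lambda)$ on $(0,\infty)$ for a fixed $u\in L^{q(x)}(M)\setminus\{0\}$. First I would record three facts about $\varphi_u$. (a) Since $q^-\ge 1>0$ and $t\mapsto t^{q(x)}$ is nonnegative and strictly increasing on $[0,\infty)$ for a.e.\ $x$, the map $\lambda\mapsto |u(x)/\lambda|^{q(x)}$ is strictly decreasing wherever $u(x)\ne 0$; integrating over $M$, $\varphi_u$ is strictly decreasing on the set where it is finite. (b) For any $\lambda_0>0$ with $\varphi_u(\lambda_0)<\infty$ one has $|u/\lambda|^{q(x)}\le |u/\lambda_0|^{q(x)}\in L^1(M)$ for all $\lambda\ge\lambda_0$, so the dominated convergence theorem (here $q^+<\infty$ is used) shows $\varphi_u$ is continuous on $[\lambda_0,\infty)$; moreover $\varphi_u(\lambda)\to 0$ as $\lambda\to\infty$ and, by Fatou's lemma together with the fact that $u\ne 0$ on a set of positive measure, $\varphi_u(\lambda)\to+\infty$ as $\lambda\to 0^+$. (c) Applying Fatou along a sequence $\lambda_n\downarrow\|u\|_{q(x)}$ with $\varphi_u(\lambda_n)\le 1$ gives $\varphi_u(\|u\|_{q(x)})\le 1<\infty$, so in particular $\|u\|_{q(x)}>0$ and $\varphi_u$ is finite and continuous in a neighbourhood of $\|u\|_{q(x)}$.

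With these in hand, item (ii) is immediate: $\varphi_u$ is continuous and strictly decreasing near $\|u\|_{q(x)}$, so $\varphi_u(\|u\|_{q(x)})=1$ (it is $\le 1$ by (c) and cannot be $<1$, else the infimum defining $\|u\|_{q(x)}$ would be strictly smaller), and by strict monotonicity $\|u\|_{q(x)}$ is the \emph{unique} $\lambda$ with $\varphi_u(\lambda)=1$. Item (i) then follows by comparing $\varphi_u(1)=\rho_{q(x)}(u)$ with $\varphi_u(\|u\|_{q(x)})=1$: strict monotonicity of $\varphi_u$ forces $\rho_{q(x)}(u)<1$, $=1$, $>1$ according as $\|u\|_{q(x)}<1$, $=1$, $>1$ (the case $u=0$ being trivial), and since these three alternatives are exhaustive and mutually exclusive on both sides, the equivalences follow.

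For (iii) and (iv), take $u\ne 0$ (the case $u=0$ is trivial), set $\lambda:=\|u\|_{q(x)}$, and use (ii) in the form $\int_M|u/\lambda|^{q(x)}\,dx=1$. Then
\[ \rho_{q(x)}(u)=\int_M \lambda^{q(x)}\,\bigl|u/\lambda\bigr|^{q(x)}\,dx. \]
If $\lambda<1$ then $\lambda^{q^+}\le\lambda^{q(x)}\le\lambda^{q^-}$ pointwise, and integrating against the probability density $|u/\lambda|^{q(x)}$ gives $\|u\|_{q(x)}^{q^+}\le\rho_{q(x)}(u)\le\|u\|_{q(x)}^{q^-}$, which is (iii); if $\lambda>1$ then $\lambda^{q^-}\le\lambda^{q(x)}\le\lambda^{q^+}$ and the same computation gives (iv). Finally, (v) follows by combining (i) and (iii): if $\|u_k-u\|_{q(x)}\to 0$ then eventually $\|u_k-u\|_{q(x)}<1$, so (iii) yields $\rho_{q(x)}(u_k-u)\le\|u_k-u\|_{q(x)}^{q^-}\to 0$; conversely, if $\rho_{q(x)}(u_k-u)\to 0$ then eventually $\rho_{q(x)}(u_k-u)<1$, hence $\|u_k-u\|_{q(x)}<1$ by (i), and (iii) gives $\|u_k-u\|_{q(x)}^{q^+}\le\rho_{q(x)}(u_k-u)\to 0$, so $\|u_k-u\|_{q(x)}\to 0$.

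Apart from the exponent arithmetic, the only genuinely delicate points are the soft-analysis facts in steps (b)--(c): producing an $L^1$ dominating function to get continuity of $\lambda\mapsto\rho_{q(x)}(u/\lambda)$ (which is where the boundedness $q^+<\infty$ enters), establishing its strict monotonicity, and deducing $\|u\|_{q(x)}>0$ for $u\ne 0$. Once these are secured, (i)--(v) are all straightforward consequences.
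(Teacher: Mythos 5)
Your proof is correct. The paper itself gives no proof of Proposition \ref{prop7} --- it is stated as a standard preliminary fact from the variable-exponent literature (the norm--modular relations of Fan--Zhao/Kov\'a\v{c}ik--R\'akosn\'{\i}k type) --- and your argument via the function $\varphi_u(\lambda)=\rho_{q(x)}(u/\lambda)$ is exactly the standard one: continuity and strict monotonicity of $\varphi_u$ give $\varphi_u(\|u\|_{q(x)})=1$, and writing $|u|^{q(x)}=\lambda^{q(x)}|u/\lambda|^{q(x)}$ with $\lambda=\|u\|_{q(x)}$ yields (iii)--(iv), from which (v) follows. The only cosmetic point is in your step (c): the positivity $\|u\|_{q(x)}>0$ for $u\neq 0$ comes from $\varphi_u(\lambda)\to+\infty$ as $\lambda\to 0^+$ (already established in your step (b)), not from the Fatou estimate $\varphi_u(\|u\|_{q(x)})\le 1$; this does not affect the validity of the argument.
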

\begin{definition} \cite{guo2015dirichlet}
The Sobolev space $ W^{1, q( x )} ( M )$ consists of such functions $ u \in L^{q( x )} ( M )$ for which $ \nabla^{k} u \in L^{q( x )} ( M )$ $k = 1, 2, \cdots, n.$ The norm is defined by 
$$ ||\, u\,||_{W^{1, q( x )} ( M )} = ||\, u\,||_{L^{q( x )} ( M )} + \sum_{k = 1}^{n} ||\, \nabla^{k} u \,||_{L^{q( x )} ( M )}.$$ 
The space $ W_{0}^{1, q( x )} ( M )$ is defined as the closure of $ C^{\infty} ( M ) $ in $ W^{1, q( x )} ( M ).$
\end{definition}
\begin{theorem}\label{theo1} \cite{guo2015dirichlet}
Let $M$ be a compact Riemannian manifold with a smooth boundary or without boundary and $ q( x ), \, p( x ) \in C( \overline{M} ) \cap L^{\infty} ( M ).$ Assume that $$ q( x ) < N , \hspace*{0.5cm} p( x ) < \frac{N\, q( x )}{N - q( x )} \,\, \mbox{for} \,\, x \in \overline{M}.$$
Then, $$ W^{1, q( x )} ( M ) \hookrightarrow L^{p( x )} ( M ),$$
is a continuous and compact embedding.
\end{theorem}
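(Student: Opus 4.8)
The plan is to deduce the manifold embedding from its Euclidean counterpart on bounded domains, using the compactness of $M$ to pass to a finite atlas and to keep all constants uniform. Recall the classical fact that on a bounded domain $\Omega\subset\mathbb{R}^{N}$ with $q,p\in C(\overline\Omega)$ and $p(y)<q^{*}(y)=\frac{N q(y)}{N-q(y)}$ for every $y\in\overline\Omega$, the embedding $W^{1,q(y)}(\Omega)\hookrightarrow L^{p(y)}(\Omega)$ is continuous and compact; the strict pointwise subcriticality together with compactness of $\overline\Omega$ yields the uniform gap $\inf_{\overline\Omega}(q^{*}-p)>0$ that powers the Rellich--Kondrachov step. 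Everything will be reduced to this.

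First I would fix a finite atlas $\{(\Omega_{i},\phi_{i})\}_{i=1}^{m}$ of $\overline M$ such that on each chart $\frac12\,\delta_{jk}\le g_{jk}\le 2\,\delta_{jk}$ as bilinear forms (possible after shrinking, by compactness of $M$), together with a smooth partition of unity $\{\alpha_{i}\}_{i=1}^{m}$ subordinate to $\{\Omega_{i}\}$. The two-sided control of $g$ makes each $\phi_{i}$ bi-Lipschitz with $\det g$ bounded above and below, so the change of variables $u\mapsto(\alpha_{i}u)\circ\phi_{i}^{-1}$ gives a two-sided comparison between the modulars $\rho_{q(x)}(u)$, $\rho_{q(x)}(|\nabla u|)$ on $M$ and the Euclidean modulars of $(\alpha_{i}u)\circ\phi_{i}^{-1}$ on $\phi_{i}(\Omega_{i})$ with exponents $\tilde q_{i}=q\circ\phi_{i}^{-1}$, $\tilde p_{i}=p\circ\phi_{i}^{-1}$; via Proposition \ref{prop7} this transfers to a comparison of Luxemburg norms, with finitely many and hence uniform constants. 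Note that $\tilde q_{i},\tilde p_{i}$ are continuous on the compact sets $\overline{\phi_{i}(\Omega_{i})}$ and still satisfy $\tilde p_{i}<\tilde q_{i}^{*}$ pointwise.

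Then the continuity of $W^{1,q(x)}(M)\hookrightarrow L^{p(x)}(M)$ follows by applying the Euclidean embedding to each $(\alpha_{i}u)\circ\phi_{i}^{-1}\in W^{1,\tilde q_{i}(y)}(\phi_{i}(\Omega_{i}))$, summing over the finitely many $i$, and transporting back: $\|u\|_{L^{p(x)}(M)}\le C\sum_{i}\|(\alpha_{i}u)\circ\phi_{i}^{-1}\|_{L^{\tilde p_{i}(y)}}\le C'\sum_{i}\|(\alpha_{i}u)\circ\phi_{i}^{-1}\|_{W^{1,\tilde q_{i}(y)}}\le C''\|u\|_{W^{1,q(x)}(M)}$. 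For compactness, given a bounded sequence $(u_{n})$ in $W^{1,q(x)}(M)$, each $\bigl((\alpha_{i}u_{n})\circ\phi_{i}^{-1}\bigr)_{n}$ is bounded in $W^{1,\tilde q_{i}(y)}(\phi_{i}(\Omega_{i}))$, hence precompact in $L^{\tilde p_{i}(y)}(\phi_{i}(\Omega_{i}))$; a finite diagonal extraction over $i=1,\dots,m$ produces a single subsequence converging in every chart, and the identity $u_{n}=\sum_{i}\alpha_{i}u_{n}$ together with the norm comparison shows that this subsequence is Cauchy, hence convergent, in $L^{p(x)}(M)$.

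The main obstacle is the bookkeeping imposed by the variable exponent: the Luxemburg norm is neither additive nor homogeneous across the threshold $1$, so the equivalence ``$\|u\|$ on $M$ $\leftrightarrow$ sum of local Euclidean norms'' cannot be read off directly and must be routed through the modulars and the inequalities (iii)--(iv) of Proposition \ref{prop7}, while tracking the constants so that none degenerates; this is exactly where the hypotheses $p,q\in C(\overline M)\cap L^{\infty}(M)$ and the compactness of $\overline M$ enter, giving finitely many charts, a uniform subcritical gap, and $1\le q^{-}\le q^{+}<\infty$, $1\le p^{-}\le p^{+}<\infty$. An alternative, more self-contained route is to note that a compact $M$ has bounded geometry, apply Proposition \ref{prop6} to obtain the continuous inclusion $L^{q(x)}_{1}(M)\hookrightarrow L^{p(x)}(M)$ at once, and then add a separate Rellich--Kondrachov argument (using the uniform gap $q^{*}-p$) for the compactness; but the atlas argument above makes both halves transparent simultaneously.
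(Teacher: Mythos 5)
The paper does not prove this theorem at all: it is imported verbatim from \cite{guo2015dirichlet} (and Theorem \ref{theo2} is then dismissed with ``same as the previous one''), so there is no internal proof to compare against. Your localization argument --- finite atlas with $\tfrac12\delta_{jk}\le g_{jk}\le 2\delta_{jk}$, partition of unity, transfer of modulars to chart images, the Euclidean variable-exponent Rellich--Kondrachov theorem on each piece, and a finite diagonal extraction --- is precisely the standard route by which such results are established in the cited literature (Guo; Gaczkowski--G\'orka; and Aubin/Hebey in the constant-exponent case), and it is essentially correct. Two points deserve one line each if you write it out in full: the Euclidean compact embedding $W^{1,\tilde q_i(y)}(\phi_i(\Omega_i))\hookrightarrow L^{\tilde p_i(y)}(\phi_i(\Omega_i))$ requires the chart images to satisfy a cone or Lipschitz condition (arrange this when shrinking the atlas), and $\nabla(\alpha_i u)=\alpha_i\nabla u+u\nabla\alpha_i$ produces a zeroth-order term that is controlled only because $u\in L^{q(x)}(M)$ is part of the $W^{1,q(x)}$-norm. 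Your closing alternative via Proposition \ref{prop6} gives continuity of the inclusion but not compactness, so the atlas argument is the one to keep.
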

\begin{theorem}\label{theo2} 
Let $M$ be a compact Riemannian manifold with a smooth boundary or without boundary and $ q( x ), \, r( x ) \in C( \overline{M} ) \cap L^{\infty} ( M ).$ Assume that $$ q( x ) < N , \hspace*{0.5cm} r( x ) < \frac{N\, q( x )}{N - q( x )} \,\, \mbox{for} \,\, x \in \overline{M}.$$
Then, $$ W^{1, q( x )} ( M ) \hookrightarrow L^{r( x )} ( M ),$$
is a continuous and compact embedding.
\end{theorem}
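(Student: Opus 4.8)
The plan is to reduce Theorem \ref{theo2} to Theorem \ref{theo1}. Indeed, the statement is \emph{formally identical} to Theorem \ref{theo1}: the hypotheses imposed on $r$ --- namely $r \in C(\overline{M}) \cap L^{\infty}(M)$ and $r(x) < \frac{Nq(x)}{N-q(x)}$ for all $x \in \overline{M}$ --- are exactly those imposed on the exponent $p$ in Theorem \ref{theo1}. Hence applying Theorem \ref{theo1} with $p$ relabelled as $r$ yields simultaneously the continuity and the compactness of the embedding $W^{1,q(x)}(M) \hookrightarrow L^{r(x)}(M)$, and nothing further is required.

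For completeness I would also record the direct argument, which is the usual localisation scheme. Since $M$ is compact, cover it by finitely many charts $(\Omega_i,\phi_i)$, $i=1,\dots,m$, on each of which the metric $g$ is comparable to the Euclidean one (as in the chart proposition preceding Theorem \ref{theo1}), and fix a smooth partition of unity $\{\alpha_i\}$ subordinate to this cover. For $u \in W^{1,q(x)}(M)$, each $(\alpha_i u)\circ\phi_i^{-1}$ belongs to $W^{1,q_i(x)}_0(\phi_i(\Omega_i))$ with $q_i = q\circ\phi_i^{-1}$, the comparison of metrics showing that $u \mapsto (\alpha_i u)\circ\phi_i^{-1}$ is bounded from $W^{1,q(x)}(M)$ into that Euclidean space, and conversely $u = \sum_i \big((\alpha_i u)\circ\phi_i^{-1}\big)\circ\phi_i$ can be reassembled with control on the $L^{r(x)}(M)$ norm. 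One then invokes the Euclidean variable-exponent Sobolev embedding on bounded domains: $W^{1,q_i(x)}_0(\phi_i(\Omega_i)) \hookrightarrow L^{r_i(x)}(\phi_i(\Omega_i))$ is continuous, and compact because $r_i(x) < q_i^{*}(x)$.

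The one point deserving care --- and the step I expect to be the crux --- is upgrading the pointwise strict inequality $r(x) < q^{*}(x)$ to the \emph{uniform} gap $\inf_{x\in\overline{M}}\big(q^{*}(x)-r(x)\big) > 0$, which is what actually powers the compactness (Rellich--Kondrachov), as opposed to mere continuity; this follows from the continuity of $q,r$ on the compact set $\overline{M}$ together with $q^{+} < N$, which makes $q^{*}=\frac{Nq}{N-q}$ continuous and hence $q^{*}-r$ attain a positive minimum. With the uniform gap available, a bounded sequence $\{u_k\}$ in $W^{1,q(x)}(M)$ produces, after passing to the finite cover, component sequences bounded in each local Sobolev space; the local compact embeddings and a diagonal extraction give a subsequence along which $(\alpha_i u_k)\circ\phi_i^{-1}$ converges in $L^{r_i(x)}$ for every $i$, and recombining through the partition of unity yields convergence of $\{u_k\}$ in $L^{r(x)}(M)$. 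The passage between modular and norm convergence used along the way is handled by Proposition \ref{prop7}(v).
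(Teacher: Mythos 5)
Your proposal is correct and matches the paper exactly: the paper's entire proof of Theorem \ref{theo2} is the one-line remark that the demonstration is the same as that of Theorem \ref{theo1}, which is precisely your observation that the statement is Theorem \ref{theo1} with $p$ relabelled as $r$. The additional localisation sketch you supply goes beyond what the paper records but is consistent with the standard argument behind Theorem \ref{theo1}.
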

\begin{proof}
The demonstration of this theorem is the same as the previous one.
\end{proof}
\begin{proposition} \cite{aubin1982nonlinear}
If $( M, g )$ is complete, then  $W^{1, q( x )} ( M ) = W^{1, q( x )}_{0} ( M ).$
\end{proposition}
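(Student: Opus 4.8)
The inclusion $W^{1,q(x)}_{0}(M)\subseteq W^{1,q(x)}(M)$ holds by definition, $W^{1,q(x)}_{0}(M)$ being the closure of $C^{\infty}(M)$ inside $W^{1,q(x)}(M)$. The plan is to establish the reverse inclusion, i.e. that the smooth compactly supported functions are dense in $W^{1,q(x)}(M)$; completeness of $(M,g)$ will be used in a truncation step, which is then combined with a local mollification.

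\textbf{Step 1: truncation.} First I would fix $x_{0}\in M$ and a cutoff profile $\xi\in C^{\infty}([0,\infty))$ with $0\le\xi\le 1$, $\xi\equiv 1$ on $[0,1]$, $\xi\equiv 0$ on $[2,\infty)$ and $|\xi'|\le 2$, and set $\eta_{j}(x)=\xi\big(d_{g}(x,x_{0})/j\big)$. Since $(M,g)$ is complete, Hopf--Rinow gives that the closed balls $\overline{B}_{2j}(x_{0})$ are compact, so each $\eta_{j}$ has compact support; moreover $x\mapsto d_{g}(x,x_{0})$ is $1$-Lipschitz, hence differentiable a.e. with $|\nabla d_{g}(\cdot,x_{0})|\le 1$, so $|\nabla\eta_{j}|\le 2/j$ a.e. For $u\in W^{1,q(x)}(M)$ one has $\nabla(\eta_{j}u)=\eta_{j}\nabla u+u\,\nabla\eta_{j}$, and because $\eta_{j}\equiv 1$ on $B_{j}(x_{0})$ and $|\nabla\eta_{j}|\le 1$ for $j\ge 2$, each of $\rho_{q(x)}(\eta_{j}u-u)$, $\rho_{q(x)}(\eta_{j}\nabla u-\nabla u)$ and $\rho_{q(x)}(u\,\nabla\eta_{j})$ is bounded above by $\int_{M\setminus B_{j}(x_{0})}\big(|u|^{q(x)}+|\nabla u|^{q(x)}\big)\,dx$. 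As $u$ and $|\nabla u|$ lie in $L^{q(x)}(M)$, this tail integral tends to $0$ as $j\to\infty$, and Proposition \ref{prop7}(v) turns modular convergence into convergence in the Luxemburg norm; hence $\eta_{j}u\to u$ in $W^{1,q(x)}(M)$, so compactly supported elements of $W^{1,q(x)}(M)$ are dense in it.

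\textbf{Step 2: mollification.} Given a compactly supported $v\in W^{1,q(x)}(M)$ with support $K$, I would cover $K$ by finitely many charts $(\Omega_{i},\phi_{i})$ on which $\tfrac12\delta_{kl}\le g_{kl}\le 2\delta_{kl}$ (available by smoothness of $g$), pick a subordinate smooth partition of unity $\{\chi_{i}\}$ with $\sum_{i}\chi_{i}\equiv 1$ near $K$, and write $v=\sum_{i}\chi_{i}v$. In each chart $(\chi_{i}v)\circ\phi_{i}^{-1}$ is a compactly supported element of the variable exponent Sobolev space $W^{1,\,q\circ\phi_{i}^{-1}(\cdot)}\big(\phi_{i}(\Omega_{i})\big)$ over a bounded Euclidean domain -- the two-sided bound on $g$ makes the Riemannian and flat norms and volume elements comparable -- and $q\circ\phi_{i}^{-1}$ inherits log-Hölder continuity from $q$ (the proposition relating $\mathcal{P}^{\log}(M)$ and $\mathcal{P}^{\log}(\mathbb{R}^{N})$), so the Meyers--Serrin type density theorem for such spaces furnishes approximations in $C_{0}^{\infty}\big(\phi_{i}(\Omega_{i})\big)$. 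Pulling back, summing over $i$, and combining with Step 1 yields density of $C_{0}^{\infty}(M)$ in $W^{1,q(x)}(M)$, that is $W^{1,q(x)}(M)=W^{1,q(x)}_{0}(M)$.

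\textbf{Where the difficulty lies.} The genuinely necessary use of completeness is in Step 1: without it the closed balls need not be compact and the truncations $\eta_{j}u$ need not be compactly supported, so the exhaustion argument collapses. The remaining care is routine: passing between modular and norm (exactly Proposition \ref{prop7}(v)), and the Euclidean density step, which for variable exponents is not automatic and relies on log-Hölder regularity of the exponent -- guaranteed here by the standing assumptions and transported to the charts as above.
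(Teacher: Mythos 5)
The paper gives no proof of this proposition at all --- it is quoted from Aubin's book \cite{aubin1982nonlinear} (with the variable-exponent version due to Gaczkowski, G{\'o}rka and Pons) --- so there is no internal argument to compare against; your truncation-plus-local-mollification sketch is precisely the standard argument behind the cited result, and it is essentially correct, correctly isolating where completeness enters (Hopf--Rinow compactness of closed balls in Step 1). The one caveat worth recording is in Step 2: density of smooth functions in $W^{1,q(x)}$ over a Euclidean chart genuinely requires log-H\"older continuity of the exponent (mere continuity is not enough, by Zhikov-type examples), and the paper introduces $\mathcal{P}^{\log}(M)$ but never explicitly imposes it as a standing hypothesis, so your proof --- like the proposition as stated --- tacitly needs that extra regularity assumption on $q$.
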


Let $ D \Psi = DJ - DI: \, L^{q( x )}_{1} ( M ) \cap L^{p( x )}_{1} ( M ) \rightarrow \mbox{Hom} ( L^{q( x )}_{1} ( M ) \cap L^{p( x )}_{1} ( M ), \, \mathbb{R} )$ the differential of $ \Psi = J - I $ with the functional $J: X \rightarrow \mathbb{R} $ defined by:
$$ J( u ) = \int_{M} \frac{1}{p( x )} |\, \nabla u( x )\,|^{p( x )} \,\, dv_{g} ( x ) + \int_{M} \frac{1}{q( x )} |\, \nabla u( x )\,|^{q( x )} \,\, dv_{g} ( x )$$
and $$ I ( u ) = \int_{M} F( x, u( x ) ) \,\, dv_{g} ( x ).$$
Where $ X = W_{0}^{1, q( x )} ( M )\, \cap \, W_{0}^{1, p( x )} ( M ),$ with the norm $ || u ||_{X} = || u ||_{q( x )} + || u ||_{p( x )} \,\, \mbox{for all} \,\, x \in M.$
Then, for all $\varphi \in D( M )$ we have  
$$\langle \, J( u ), \, \varphi \,\rangle = \int_{M} |\, \nabla u( x )\,|^{p( x ) - 2} u( x ) \, \varphi ( x ) \,\, dv_{g} ( x ) + \int_{M} |\, \nabla u( x ) \,|^{q( x ) - 2} u( x )\, \varphi ( x )\,\, dv_{g} ( x ),$$
where $\langle., \, . \rangle$ denotes the usual duality between $X$ and its dual space.
\begin{lemma}
The following assumptions hold:
\begin{itemize}
\item[i/] $J$ is a continuous, bounded homeomorphism and strictly monotone operator.
\item[ii/] $J$ is a mapping of type $( S_{+} )$, that is, if $ u_{n} \rightharpoonup u$ and $$\lim \, \sup_{n \rightarrow + \infty} \langle \, J( u_{n} ) - J( u ), \, u_{n} - u \, \rangle \, \leq 0,$$ then, $u_{n} \rightarrow u. $
\end{itemize}
\end{lemma}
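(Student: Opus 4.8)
The plan is to handle the two assertions separately, reducing both to the elementary monotonicity of the maps $\xi\mapsto|\xi|^{s-2}\xi$ together with the modular--norm relations of Proposition \ref{prop7}.

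\emph{Part (i).} Boundedness of $J$ is immediate: if $\|u\|_X\le R$, then by Proposition \ref{prop7}(iii)--(iv) the modulars $\rho_{p(x)}(|\nabla u|)$ and $\rho_{q(x)}(|\nabla u|)$ are bounded, so $|\nabla u|^{p(x)-1}\in L^{p'(x)}(M)$ and $|\nabla u|^{q(x)-1}\in L^{q'(x)}(M)$ with norms controlled by $R$, and the Hölder inequality in variable exponent spaces bounds $|\langle J(u),\varphi\rangle|$ by a constant times $\|\varphi\|_X$. For continuity, take $u_n\to u$ in $X$; then $\nabla u_n\to\nabla u$ in $L^{p(x)}(M)$ and in $L^{q(x)}(M)$, hence along a subsequence a.e.\ with an $L^{p(x)}$ (resp.\ $L^{q(x)}$) dominating function, and the continuity of the Nemytskii operators $v\mapsto|v|^{p(x)-2}v:L^{p(x)}\to L^{p'(x)}$ (resp.\ $q$) gives convergence of $|\nabla u_n|^{p(x)-2}\nabla u_n$ in $L^{p'(x)}(M)$; a subsequence argument removes the extraction, and Hölder yields $\|J(u_n)-J(u)\|_{X^{*}}\to0$. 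Strict monotonicity follows by integrating the pointwise inequalities $\bigl(|\xi|^{s-2}\xi-|\eta|^{s-2}\eta\bigr)\cdot(\xi-\eta)>0$ for $\xi\ne\eta$ with $s=p(x)$ and $s=q(x)$, so that $\langle J(u)-J(v),u-v\rangle>0$ whenever $\nabla u\ne\nabla v$ on a set of positive measure, i.e.\ whenever $u\ne v$ in $X$. Finally, to obtain that $J$ is a homeomorphism I would couple strict monotonicity with the coercivity estimate $\langle J(u),u\rangle/\|u\|_X\to+\infty$ as $\|u\|_X\to\infty$ (again via Proposition \ref{prop7}, since $\langle J(u),u\rangle=\rho_{p(x)}(|\nabla u|)+\rho_{q(x)}(|\nabla u|)\gtrsim\|u\|_X^{p^{-}}$ once $\|u\|_X$ is large) and invoke the Browder--Minty theorem on the reflexive space $X$ to get that $J$ is a continuous bijection; continuity of $J^{-1}$ is then deduced from Part (ii): if $f_n\to f$ in $X^{*}$ and $u_n=J^{-1}(f_n)$, coercivity makes $\{u_n\}$ bounded, a subsequence converges weakly to some $v$, the $(S_+)$ property upgrades this to $u_n\to v$ strongly, continuity of $J$ gives $J(v)=f$ so $v=J^{-1}(f)$, and the usual subsequence principle shows the whole sequence converges.

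\emph{Part (ii).} This is the heart of the matter. Suppose $u_n\rightharpoonup u$ in $X$ with $\limsup_n\langle J(u_n)-J(u),u_n-u\rangle\le0$. Since each of the two pieces of $\langle J(u_n)-J(u),u_n-u\rangle$ is nonnegative by monotonicity, both tend to $0$; in particular
$$\int_M\bigl(|\nabla u_n|^{p(x)-2}\nabla u_n-|\nabla u|^{p(x)-2}\nabla u\bigr)\cdot\nabla(u_n-u)\,dv_g\longrightarrow0,$$
and likewise with $q(x)$ in place of $p(x)$. I would then apply the classical Simon-type inequalities --- $\bigl(|\xi|^{s-2}\xi-|\eta|^{s-2}\eta\bigr)\cdot(\xi-\eta)\ge c_s|\xi-\eta|^s$ when $s\ge2$, and $\ge c_s|\xi-\eta|^2\bigl(|\xi|+|\eta|\bigr)^{s-2}$ when $1<s<2$ --- with $s=p(x)$ on the sets $\{x:p(x)\ge2\}$ and $\{x:1<p(x)<2\}$ respectively (and the same with $q$), to deduce first that $\nabla u_n\to\nabla u$ in measure and then, handling the $1<p(x)<2$ part by a Hölder splitting of $|\nabla u_n-\nabla u|^{p(x)}$ using the boundedness of $\|\nabla u_n\|_{p(x)}$, that $\rho_{p(x)}(|\nabla u_n-\nabla u|)\to0$; by Proposition \ref{prop7}(v) this gives $\nabla u_n\to\nabla u$ in $L^{p(x)}(M)$, and similarly in $L^{q(x)}(M)$. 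Combined with $u_n\to u$ in $L^{p(x)}(M)\cap L^{q(x)}(M)$, which follows from the compact embedding of Theorem \ref{theo1}, this yields $u_n\to u$ in $X$.

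I expect the delicate point to be exactly this last passage in Part (ii): with a genuinely variable exponent one must split $M$ according to whether $p(x)\gtrless2$, use the appropriate pointwise inequality on each piece, and then convert the resulting in-measure convergence into modular (hence norm) convergence while keeping all constants uniform in $x$ --- this is what makes the argument more involved than in the constant-exponent case, whereas everything else is routine.
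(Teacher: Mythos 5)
The paper states this lemma without any proof at all, so there is no argument of the authors' to set yours against; what you have written is the classical argument for the $p(x)$-Laplacian (strict monotonicity from the pointwise vector inequalities, Browder--Minty plus $(S_+)$ for the homeomorphism property, and the Simon inequalities with a splitting of $M$ into $\{p(x)\ge 2\}$ and $\{1<p(x)<2\}$ for $(S_+)$), adapted to the sum of the two operators, and it is essentially correct. Three remarks. First, in the coercivity estimate of Part (i) the exponent should be $\min\{p^-,q^-\}=q^-$ rather than $p^-$: for $\|u\|_X$ large one only gets $\langle J(u),u\rangle \gtrsim \|u\|_X^{q^-}$, which is still superlinear since $q^->1$, so the Browder--Minty argument is unaffected. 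Second, the final passage from $\nabla u_n\to\nabla u$ in $L^{p(x)}\cap L^{q(x)}$ to $u_n\to u$ in $X$ invokes the compact embedding of Theorem \ref{theo1}, which the paper only establishes for \emph{compact} manifolds while the ambient setting is a complete non-compact one; this gap is inherited from the paper itself, which uses the same theorems throughout, and is not specific to your proof. Third, you have tacitly (and correctly) read the operator $J$ of the lemma as the derivative $J':X\to X^*$ acting on gradients, which is the only interpretation under which the statement and its later uses in the proof of the Cerami condition make sense --- the paper's own displayed formula for $\langle J(u),\varphi\rangle$ contains typos ($u\,\varphi$ where $g(\nabla u,\nabla\varphi)$ is meant). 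Your identification of the delicate point --- keeping the Simon constants uniform in $x$, which works because $p$ and $q$ range in a compact subinterval of $(1,\infty)$ --- is exactly right.
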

Next, we recall the definition of Cerami condition $( C )$ which is introduced by G. Cerami in \cite{cerami1978existence}.
\begin{definition}
Let $( E, \, ||\,.\,|| )$ be a Banach space  and $\Phi \in C^{1} ( E, \, \mathbb{R} ).$ Given $ c \in \mathbb{R},$ we say that $\Phi $ satisfies the Cerami $C$ condition ( we denote condition $( C_{c} )$), if:
\begin{itemize}
\item[$( C_{1} ) $:] Any bounded sequence ${\, u_{n} \,} \subset E$ such that  $ \Phi ( u_{n} ) \rightarrow c $ and $\Phi'( u_{n} ) \rightarrow 0$ has a convergent subsequence.
\item[$( C_{2} )$ :] There exist constant $ \delta, \, R,\, \beta > 0$ such that $$ ||\, \Psi'( u ) \,||_{E^{*}} \, ||\, u\,|| \geq \beta \,\, \mbox{for all} \,\, u \in \Psi^{-1} ( [\, c - \delta, \, c + \delta \,] ) \,\, \mbox{with} \,\, ||\, u\,|| \geq R.$$
\end{itemize}
\end{definition}
If $ \Psi \in C^{1} ( X, \, \mathbb{R} )$ satisfies condition $( C_{c} )$ for every $ c \in \mathbb{R},$ we say that $ \Psi $ satisfies condition $( C )$. \\
Let us recall the following version of mountain pass Lemma with Cerami condition with will be used in the sequel.
\begin{proposition}\label{prop1}
Let $ ( E, \, ||.|| )$ a Banach space, $\Psi \in C^{1} ( X, \, \mathbb{R} ), \,\, u_{0} \in E$ and $ \upsilon > 0,$ be such that $ || u_{0} || > \upsilon$ and $$ b = \inf_{|| u_{0} || = \upsilon} \Psi ( u ) > \Psi ( 0 ) \geq \Psi ( u_{0} ).$$ 
If $\Psi $ satisfies the condition $( C_{c} )$ with $$ c = \inf_{\gamma \in \Gamma} \max_{t \in [ 0, \, 1 ]} \Psi ( \gamma ( t ) ), \,\,\, \Gamma = \{ \gamma \in C( [ 0, \, 1 ],\, X ) \,\mbox{such as} \,\, \gamma ( 0 ) = 0, \, \gamma ( 1 ) = u_{0} \,\}.$$
Then $c$ is a critical value of $\Psi.$
\end{proposition}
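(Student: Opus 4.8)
\emph{Proof proposal.} The plan is to run the classical minimax--deformation argument for the mountain pass level, with the one modification forced by working with Cerami instead of Palais--Smale. First I would verify that the number $c$ is finite and strictly separated from the values at the two ``wells''. Indeed, for any $\gamma\in\Gamma$ the map $t\mapsto\|\gamma(t)\|$ is continuous with $\|\gamma(0)\|=0<\upsilon$ and $\|\gamma(1)\|=\|u_{0}\|>\upsilon$, so there is $t_{*}\in(0,1)$ with $\|\gamma(t_{*})\|=\upsilon$; hence $\max_{t\in[0,1]}\Psi(\gamma(t))\ge\Psi(\gamma(t_{*}))\ge b$, and taking the infimum over $\Gamma$ gives $c\ge b$. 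Using the straight segment $\gamma(t)=tu_{0}$ together with the continuity of $\Psi$ shows $c<\infty$. Consequently $c\ge b>\Psi(0)\ge\Psi(u_{0})$, and I may fix $\varepsilon_{0}\in(0,\delta]$ small enough that $\Psi(0),\,\Psi(u_{0})<c-\varepsilon_{0}$.

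Next I argue by contradiction: suppose $c$ is \emph{not} a critical value, i.e. $\Psi'(u)\ne 0$ whenever $\Psi(u)=c$. The first real step is to turn this into a quantitative lower bound: I claim there exist $\varepsilon\in(0,\varepsilon_{0})$ and $\alpha>0$ with
$$(1+\|u\|)\,\|\Psi'(u)\|_{E^{*}}\ \ge\ \alpha\qquad\text{for all }u\text{ with }|\Psi(u)-c|\le\varepsilon .$$
For $\|u\|\ge R$ this is immediate from $(C_{2})$ (since $(1+\|u\|)\|\Psi'(u)\|\ge\|u\|\,\|\Psi'(u)\|\ge\beta$ on $\Psi^{-1}([c-\delta,c+\delta])$). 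For $\|u\|\le R$ I use $(C_{1})$: if no such bound held, there would be a sequence $(u_{n})$ with $\|u_{n}\|\le R$, $\Psi(u_{n})\to c$ and $\Psi'(u_{n})\to 0$; being a \emph{bounded} Cerami sequence it would, by $(C_{1})$, have a subsequence converging to some $u$ with $\Psi(u)=c$ and $\Psi'(u)=0$, contradicting the assumption.

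Now I would perform the deformation. On the open set $\{\Psi'\ne 0\}$ choose a locally Lipschitz pseudo-gradient vector field $V$ (Palais' lemma, valid in a Banach space) with $\|V(u)\|\le 2\|\Psi'(u)\|_{E^{*}}$ and $\langle\Psi'(u),V(u)\rangle\ge\|\Psi'(u)\|_{E^{*}}^{2}$, pick a locally Lipschitz cutoff $g:E\to[0,1]$ with $g\equiv 1$ on $\{|\Psi-c|\le\varepsilon\}$ and $g\equiv 0$ on $\{|\Psi-c|\ge\varepsilon_{0}\}$ (well defined since there are no critical points in the band $\{|\Psi-c|\le\varepsilon\}$ by the bound above), and solve the Cauchy problem
$$\dot\eta(t,u)=-\,g(\eta(t,u))\,\frac{(1+\|\eta(t,u)\|)}{\|V(\eta(t,u))\|}\,V(\eta(t,u)),\qquad \eta(0,u)=u .$$
The point where the Cerami structure enters, and what I expect to be the only delicate step, is the weight $(1+\|\eta\|)$: it makes the right-hand side grow at most linearly in $\|\eta\|$, so by Gronwall the flow exists for all $t\ge 0$, while at the same time $\tfrac{d}{dt}\Psi(\eta)\le-\tfrac12\,g(\eta)\,(1+\|\eta\|)\,\|\Psi'(\eta)\|\le-\tfrac{\alpha}{2}$ wherever $g(\eta)=1$. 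Hence for $T:=4\varepsilon/\alpha$ the map $\eta(T,\cdot)$ is continuous, equals the identity off $\{|\Psi-c|\le\varepsilon_{0}\}$, and satisfies $\eta(T,\{\Psi\le c+\varepsilon\})\subset\{\Psi\le c-\varepsilon\}$. To finish, pick $\gamma\in\Gamma$ with $\max_{t}\Psi(\gamma(t))<c+\varepsilon$; since $\Psi(0),\Psi(u_{0})<c-\varepsilon_{0}$ the flow fixes both endpoints, so $\widetilde\gamma:=\eta(T,\gamma(\cdot))\in\Gamma$ and $\max_{t}\Psi(\widetilde\gamma(t))\le c-\varepsilon<c$, contradicting the definition of $c$. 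Therefore $c$ must be a critical value of $\Psi$. The remaining verifications (existence of pseudo-gradient and cutoff, continuity and group-type properties of $\eta$) are standard and would be carried out as in the classical mountain pass theorem.
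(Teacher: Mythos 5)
The paper does not actually prove this proposition: it is recalled from the literature as the known Cerami-condition version of the mountain pass theorem, so there is no internal proof to compare yours against. Your deformation argument is the standard proof of that classical result, and it is essentially correct. The three ingredients are all in the right place: the intermediate-value argument giving $c\ge b$ (you correctly read the displayed infimum as being over $\{\|u\|=\upsilon\}$); the quantitative lower bound $(1+\|u\|)\,\|\Psi'(u)\|_{E^{*}}\ge\alpha$ on a band about level $c$, obtained from $(C_{2})$ for $\|u\|\ge R$ and from $(C_{1})$ via a compactness--contradiction argument for $\|u\|\le R$; and the pseudo-gradient flow weighted by $(1+\|\eta\|)/\|V(\eta)\|$, whose at-most-linear growth yields global existence while the descent rate stays bounded below by $\alpha/2$ inside the band. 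This is precisely where the Cerami condition replaces Palais--Smale, and you have identified it correctly.

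One technical slip should be repaired. You take the cutoff $g$ to vanish only on $\{|\Psi-c|\ge\varepsilon_{0}\}$, so $g$ may be positive on the annulus $\{\varepsilon<|\Psi-c|<\varepsilon_{0}\}$, where your lower bound gives no information and the pseudo-gradient $V$ need not be defined (there could be critical points of $\Psi$ at levels in $(c+\varepsilon,c+\varepsilon_{0}]$ or $[c-\varepsilon_{0},c-\varepsilon)$). The standard fix is to shrink the support of $g$ into the region you control: take $g\equiv 1$ on $\{|\Psi-c|\le\varepsilon/2\}$ and $g\equiv 0$ off $\{|\Psi-c|<\varepsilon\}$, run the flow for $T=2\varepsilon/\alpha$, and deform $\{\Psi\le c+\varepsilon/2\}$ into $\{\Psi\le c-\varepsilon/2\}$. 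The endpoints $0$ and $u_{0}$ are still fixed, since their levels lie below $c-\varepsilon_{0}<c-\varepsilon$, so $\widetilde\gamma\in\Gamma$ and the contradiction with the definition of $c$ goes through unchanged.
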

\begin{remark}
Since $X$ be a reflexive and separable Banach space, there exist $\{ e_{n} \}_{n = 1}^{\infty} \subset X$ and $\{ e^{*}_{n} \}_{n = 1}^{\infty} \subset X^{*}$ such that
$$ \langle e^{*}_{n}, \, e_{m} \rangle \,= \delta_{n,\,m} = \begin{cases}
1 & \text{ if $ n = m$ }, \\[0.3cm]
0 & \text{ in $ n \neq m$ }.
\end{cases}$$
Hence, $$X = \overline{\mbox{span}} \, \{ e_{n}, \, n \geq 1 \,\} \,\, \mbox{and} \,\,X^{*} = \overline{\mbox{span}} \, \{ e^{*}_{n}, \, n \geq 1 \,\}.  $$
For $k \geq 1,$ denote $X^{k} = \mbox{span} \,\{\, e_{k} \,\}, \,\, Y_{k} = \displaystyle \bigoplus_{i = 0}^{k} X^{k}, \,\, Z_{k} = \displaystyle \overline{\bigoplus_{i = k}^{\infty} X^{k}}.$
\end{remark}\label{remark1}
Next, as in \cite{zou2001variant} we introduce the Fountain Theorem with the condition $( C )$ as follow:
\begin{theorem}\label{prop2}
Assume that $X$ is a separable Banach space, $\Psi \in C^{1} ( X, \, \mathbb{R} )$ is an even functional satisfying the Cerami condition $( C ).$ Moreover, for each $ k \geq 1$ there exist $ D_{k} > d_{k} > 0$ such as
\begin{itemize}
\item[$( A_{1} )$:] $\displaystyle \inf_{\{ u \in Z_{k} : || u || = d_{k} \}} \Psi ( u ) \rightarrow + \infty \,\, \mbox{as} \,\, k \rightarrow + \infty.$ 
\item[$( A_{2} )$:] $ \displaystyle \max_{\{ u \in Y_{k} : || u || = D_{k} \}} \Psi ( u ) \leq 0.$
\end{itemize}
Then, $\Psi$ has a sequence of critical values which tends to $+ \infty.$
\end{theorem}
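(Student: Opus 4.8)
The plan is to prove this in exactly the same way as the classical Fountain Theorem, the only genuinely new ingredient being that the deformation step has to be carried out under the Cerami condition $(C)$ rather than under Palais--Smale; this is precisely the point of \cite{zou2001variant}, whose argument I would follow. Concretely, for each $k\ge 1$ put $B_{k}=\{\,u\in Y_{k}:\ \|u\|\le D_{k}\,\}$ and define the minimax class and value
$$\Gamma_{k}=\{\,\gamma\in C(B_{k},X):\ \gamma\ \text{is odd and}\ \gamma|_{\partial B_{k}}=\mathrm{id}\,\},\qquad c_{k}=\inf_{\gamma\in\Gamma_{k}}\ \max_{u\in B_{k}}\Psi(\gamma(u)).$$
The goal is to show that each $c_{k}$ is a critical value of $\Psi$ and that $c_{k}\to+\infty$; the sequence $\{c_{k}\}$ is then the asserted sequence of critical values.

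First I would establish the intersection property: for every $\gamma\in\Gamma_{k}$ one has $\gamma(B_{k})\cap\big(Z_{k}\cap\{\,u:\|u\|=d_{k}\,\}\big)\neq\varnothing$. Since $d_{k}<D_{k}$, $\gamma$ is odd, and $\gamma$ restricts to the identity on $\partial B_{k}$, this is a Borsuk--Ulam / topological degree computation identical to the one in the constant-exponent case. It yields $\max_{u\in B_{k}}\Psi(\gamma(u))\ \ge\ \inf_{\{u\in Z_{k}:\|u\|=d_{k}\}}\Psi(u)$ for every $\gamma\in\Gamma_{k}$, hence $c_{k}\ge\inf_{\{u\in Z_{k}:\|u\|=d_{k}\}}\Psi(u)$, and by $(A_{1})$ the right-hand side tends to $+\infty$; in particular $c_{k}\to+\infty$ and $c_{k}>0$ for $k$ large. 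Finiteness of $c_{k}$ is clear because $\mathrm{id}\in\Gamma_{k}$ and $\Psi$ is continuous on the compact set $B_{k}\subset Y_{k}$. Moreover $(A_{2})$ forces $\Psi\le 0$ on $\partial B_{k}$, so for $k$ large $\partial B_{k}$ sits strictly below the level $c_{k}$; this is what makes the boundary condition $\gamma|_{\partial B_{k}}=\mathrm{id}$ stable under the deformation used next.

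Next I would run the minimax argument. Suppose, towards a contradiction, that $c_{k}$ is a regular value of $\Psi$. Since $\Psi\in C^{1}(X,\mathbb{R})$ satisfies the Cerami condition $(C)$, a quantitative deformation lemma is available in this setting (of Bartolo--Benci--Fortunato type; see \cite{zou2001variant}): writing $\Psi^{a}:=\{\,u\in X:\Psi(u)\le a\,\}$, there exist $\varepsilon\in(0,c_{k}/2)$ and a homeomorphism $\eta:X\to X$ with $\eta(\Psi^{c_{k}+\varepsilon})\subset\Psi^{c_{k}-\varepsilon}$ and $\eta=\mathrm{id}$ on $\Psi^{c_{k}-2\varepsilon}$; and because $\Psi$ is even, $\eta$ can be taken odd. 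Pick $\gamma\in\Gamma_{k}$ with $\max_{B_{k}}\Psi\circ\gamma<c_{k}+\varepsilon$ and consider $\eta\circ\gamma$. It is odd; on $\partial B_{k}$ one has $\Psi\circ\gamma\le 0<c_{k}-2\varepsilon$ (for $k$ large), so $\eta\circ\gamma=\gamma=\mathrm{id}$ there, whence $\eta\circ\gamma\in\Gamma_{k}$; and $\max_{B_{k}}\Psi\circ(\eta\circ\gamma)\le c_{k}-\varepsilon<c_{k}$, contradicting the definition of $c_{k}$. Therefore each $c_{k}$ is a critical value of $\Psi$, and $c_{k}\to+\infty$.

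The main obstacle is the deformation lemma under the Cerami condition. Under $(C)$ a pseudo-gradient descent sequence is only known to satisfy $\|\Psi'(u_{n})\|\,(1+\|u_{n}\|)\to 0$, so the naive negative pseudo-gradient flow $\dot\eta=-V(\eta)$ need not be complete, nor need its trajectories remain in a region where $(C)$ provides compactness; the remedy is to flow along the rescaled field $-\frac{1+\|u\|}{\|V(u)\|}\,V(u)$ for a locally Lipschitz pseudo-gradient $V$ of $\Psi$ (this is exactly where the lower bound $(C_{2})$ on $\|\Psi'(u)\|\,\|u\|$ is used) and to estimate trajectories over bounded time intervals. Once this deformation statement is in hand, the remainder is the standard Fountain Theorem bookkeeping, with the evenness of $\Psi$ used to symmetrize the flow so that $\eta$ and all competitor maps stay odd.
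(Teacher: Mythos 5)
Your outline is sound, but be aware that the paper does not prove this statement at all: it is quoted verbatim as a known tool from \cite{zou2001variant} (preceded by ``as in \cite{zou2001variant} we introduce the Fountain Theorem with the condition $(C)$''), so there is no in-paper argument to compare against. What you wrote is the standard proof of the Fountain Theorem adapted to the Cerami setting: the minimax class $\Gamma_{k}$ of odd maps fixing $\partial B_{k}$, the Borsuk--Ulam intersection lemma giving $c_{k}\geq\inf_{\{u\in Z_{k}:\|u\|=d_{k}\}}\Psi(u)\to+\infty$ via $(A_{1})$ (this step is purely topological and needs only $\dim Y_{k}<\infty$ and $X=Y_{k-1}\oplus Z_{k}$, both of which hold for the decomposition in the paper's Remark), and the quantitative odd deformation of Bartolo--Benci--Fortunato type to show each $c_{k}$ is critical, with $(A_{2})$ guaranteeing $\Psi\leq0<c_{k}-2\varepsilon$ on $\partial B_{k}$ so that the boundary condition survives the deformation. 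You also correctly isolate the only point where Cerami genuinely differs from Palais--Smale, namely that the pseudo-gradient field must be rescaled by $1+\|u\|$ so that the flow is controlled on bounded time intervals and the lower bound in $(C_{2})$ can be exploited; this is exactly the content of the deformation lemma in Zou's paper. Two bookkeeping points you should tighten if you write this out in full: the deformation lemma gives $\eta=\mathrm{id}$ outside $\Psi^{-1}([c_{k}-2\varepsilon,\,c_{k}+2\varepsilon])$ rather than on all of $\Psi^{c_{k}-2\varepsilon}$ (so use a strict sublevel or shrink $\varepsilon$), and you must verify that the symmetrized pseudo-gradient can indeed be chosen odd, which uses the evenness of $\Psi$ in the standard way. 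Neither is a genuine gap.
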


\section{Existence and multiplicity results:}\label{sec3}
In this section, we state our main results and we note by $D( M )$ the space of $C^{\infty}$ functions with compact support in $M$.
\begin{definition}
$u \in X$ is said to be a non-trivial solution of the problem $( \mathcal{P} )$ if for every $ \phi \in D( M )$ we have
\begin{align*}
&\int_{M} | \nabla u( x )|^{p( x ) - 2} g( \nabla u( x ), \, \nabla \phi ( x ) ) \,\, dv_{g} ( x ) + \int_{M} | \nabla u( x ) |^{q( x ) - 2} g( \nabla u( x ), \nabla \phi ( x ) ) \,\, dv_{g} ( x )\\& - \int_{M} f( x, u( x ) ) \,.\, \phi ( x ) \,\, dv_{g} ( x ) = 0. 
\end{align*} 

Considering the energy function $ \Psi : X \rightarrow \mathbb{R} $ associated to problem $( \mathcal{P} )$ defined by 
\begin{align*}
\Psi ( u ) =& \int_{M} \frac{1}{p( x )} | \nabla u( x )|^{p( x )} \,\, dv_{g} ( x ) + \int_{M} \frac{1}{q( x )} | \nabla u( x ) |^{q( x )} \,\, dv_{g} ( x )\\& - \int_{M} F( x, u( x ) ) \,\, dv_{g} ( x ).
\end{align*}
By condition $( f_{1} )$ and Theorems \ref{theo1} and \ref{theo2}, the functional $ \Psi \in C^{1} ( X, \, \mathbb{R} )$ is well defined.\\
Moreover, for all $ \phi \in D( M )$ we have
\begin{align*}
\langle \Psi'( u ), \phi \rangle \, = &\int_{M} | \nabla u( x )|^{p( x ) - 2} g( \nabla u( x ), \, \nabla \phi ( x ) )\,\, dv_{g} ( x ) \\&+ \int_{M} | \nabla u( x )|^{q( x ) - 2} g( \nabla u( x ), \, \nabla \phi ( x ) )\,\, dv_{g} ( x ) \\&- \int_{M} f( x, u( x ) ) \,.\, \phi ( x ) \,\, dv_{g} ( x ) \hspace*{1cm} \forall u \in X 
\end{align*}
\end{definition}

\begin{lemma}\label{lemma2}
Assume that the assumptions $(f_{1} ), \, ( f_{2} ),$ and $( f_{4} )$ are satisfied. Then the functional $\Psi$ satisfies the Cerami condition $( C ).$
\end{lemma}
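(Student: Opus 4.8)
The plan is to verify the two parts of the Cerami condition $(C)$ separately, following the scheme used in the variable-exponent literature (cf. \cite{zhang2015existence, hurtado2018existence}). Part $(C_2)$ is the easier one and I would dispatch it first: arguing by contradiction, suppose there is a sequence $u_n \in \Psi^{-1}([c-\delta,c+\delta])$ with $\|u_n\|_X \to \infty$ but $\|\Psi'(u_n)\|_{X^*}\|u_n\|_X \to 0$. Testing $\Psi'(u_n)$ against $u_n$ itself and using $(f_2)$ (superlinearity at infinity forces $F(x,t)$ to dominate $|t|^{p^+}$, hence the potential term swamps the gradient terms $\int \frac{1}{p(x)}|\nabla u_n|^{p(x)} + \frac{1}{q(x)}|\nabla u_n|^{q(x)}$), one finds that $\Psi(u_n) \to -\infty$, contradicting the bound $\Psi(u_n)\ge c-\delta$. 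The bookkeeping here requires Proposition~\ref{prop7}(iii)--(iv) to pass between the modular and the norm and to handle the fact that $\|u_n\|_X$ large does not immediately tell us which of the two norm components is large, but this is routine.

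The substantive part is $(C_1)$: given a \emph{bounded} sequence $\{u_n\}\subset X$ with $\Psi(u_n)\to c$ and $\Psi'(u_n)\to 0$, produce a convergent subsequence. Since $X = W_0^{1,q(x)}(M)\cap W_0^{1,p(x)}(M)$ is reflexive (each factor is uniformly convex for exponents bounded away from $1$ and $\infty$), boundedness yields a subsequence with $u_n \rightharpoonup u$ in $X$. By the compact embedding of Theorems~\ref{theo1} and \ref{theo2} — applied with $r(x)$ in the role of the target exponent, using $(f_1)$ and $r(x) < q^*(x)$ — we get $u_n \to u$ strongly in $L^{r(x)}(M)$, and hence (after passing to a further subsequence) $u_n \to u$ a.e.\ with an $L^{r(x)}$-dominating function. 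Then $(f_1)$ and the generalized Hölder inequality give $\int_M f(x,u_n)(u_n - u)\,dv_g \to 0$. Combining this with $\langle \Psi'(u_n), u_n - u\rangle \to 0$ (which holds because $\Psi'(u_n)\to 0$ in $X^*$ and $\{u_n - u\}$ is bounded) yields
$$ \limsup_{n\to\infty} \langle J'(u_n), u_n - u\rangle \le 0, $$
where $J$ is the $(p(x),q(x))$-energy operator. By the $(S_+)$ property of $J$ recorded in the Lemma preceding Proposition~\ref{prop1}, this forces $u_n \to u$ strongly in $X$, which is exactly $(C_1)$.

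The role of hypothesis $(f_4)$ deserves a remark: it is the replacement for the Ambrosetti--Rabinowitz condition and is what actually makes $(C_2)$ (equivalently, the ``$\Psi(u_n)\to-\infty$'' step) go through in the non-homogeneous setting — without $(f_4)$ one cannot control the quantity $G_\lambda(x,u_n) = f(x,u_n)u_n - \lambda F(x,u_n)$ uniformly over the relevant range of $\lambda\in[2q^-,2p^+]$, and it is precisely this uniform comparison $\theta\, G_\lambda(x,t)\ge G_\eta(x,\beta t)$ that lets one estimate $\Psi(u_n) - \frac{1}{\lambda}\langle\Psi'(u_n),u_n\rangle$ below. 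I would therefore organize the $(C_2)$ argument around a judicious choice of $\lambda$ (e.g.\ matching the homogeneity of the dominant gradient term) and invoke $(f_4)$ to absorb the error terms. The main obstacle, and the place where care is genuinely needed, is this $(C_2)$ estimate in the presence of two non-homogeneous operators with different exponents: one must simultaneously track $\rho_{p(x)}(\nabla u_n)$ and $\rho_{q(x)}(\nabla u_n)$, decide which dominates as $\|u_n\|_X\to\infty$, and use $(f_2)$ and $(f_4)$ to conclude — the $(C_1)$ part, by contrast, is a fairly standard compactness-plus-$(S_+)$ argument once the compact embedding into $L^{r(x)}(M)$ is in hand.
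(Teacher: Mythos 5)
Your treatment of $(C_1)$ is correct and is essentially the paper's argument: weak convergence of the bounded sequence, compact embedding into $L^{r(x)}(M)$ via $(f_1)$ and Theorem~\ref{theo2} to kill the term $\int_M f(x,u_n)(u_n-u)\,dv_g$, and then the $(S_+)$ property of $J$ to upgrade weak to strong convergence. No complaints there.

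The genuine gap is in $(C_2)$, which you describe as ``the easier one'' and propose to dispatch by a direct domination argument: test against $u_n$, invoke $(f_2)$ to claim the potential term swamps the gradient terms, and conclude $\Psi(u_n)\to-\infty$. This step fails. The hypothesis $(f_2)$ is a pointwise statement as $|t|\to\infty$, and $\|u_n\|_X\to\infty$ does \emph{not} imply that $|u_n(x)|\to\infty$ on a set of positive measure; the sequence can concentrate, in which case $\int_M F(x,u_n)\,dv_g$ need not dominate $\int_M|\nabla u_n|^{p(x)}\,dv_g$ at all. Detecting whether concentration happens is exactly the point of the paper's argument: one normalizes $w_n=u_n/\|u_n\|_X$, extracts $w_n\rightharpoonup w$, and splits into two cases. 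If $w\neq 0$, then $|u_n(x)|\to\infty$ on the set $\{w\neq 0\}$ of positive measure, and $(f_2)$ plus Fatou's lemma force $\int_M \frac{f(x,u_n)u_n}{|u_n|^{p^+}}|w_n|^{p^+}\,dv_g\to+\infty$, contradicting the bound $1-o(1)$ obtained from $\langle\Psi'(u_n),u_n\rangle=o(1)\|u_n\|_X$. If $w=0$ — the case your direct argument cannot see — the paper uses Jeanjean's device: choose $t_n\in[0,1]$ maximizing $t\mapsto\Psi(tu_n)$, show $\Psi(t_nu_n)\to+\infty$ by evaluating at the rescaled points $(2p^+A)^{1/p^-}w_n$, and then transfer this divergence from $t_nu_n$ back to $u_n$ via the comparison $\theta\,G_{\lambda}(x,t)\geq G_{\eta}(x,\beta t)$ of $(f_4)$, contradicting the finiteness of $c=\lim\bigl[\frac{1}{\alpha_n}\int_M f(x,u_n)u_n\,dv_g-\int_M F(x,u_n)\,dv_g\bigr]$. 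Your closing remarks correctly identify that $(f_4)$ must enter and that one should estimate $\Psi(u_n)-\frac{1}{\lambda}\langle\Psi'(u_n),u_n\rangle$, but ``a judicious choice of $\lambda$ to absorb error terms'' is not a proof: without the normalization, the dichotomy on $w$, the maximizing sequence $t_n$, and Fatou's lemma, the argument does not close. Also note that $\Psi(u_n)\to-\infty$ would contradict $\Psi(u_n)\to c$, not merely the lower bound $\Psi(u_n)\geq c-\delta$; but since that conclusion is not reachable by your route, the point is moot.
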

\begin{proof}
For any $c \in \mathbb{R},$ we first show that $ \Psi$ satisfies the assertion $( C_{1} )$ of Cerami condition $( C )$. In fact, let $\{ u_{m} \} \subset X$ be a bounded sequence such as
\begin{equation}\label{1}
\Psi ( u_{m} ) \xrightarrow{m \rightarrow + \infty} c \hspace*{0.5cm} \mbox{and} \hspace*{0.5cm} \Psi' ( u_{m} ) \xrightarrow{m \rightarrow + \infty} 0,
\end{equation}
without loss of generality, we assume that $ u_{m} \rightharpoonup u$ as $ m \rightarrow + \infty.$ By \eqref{1} we have $$ \langle \Psi' ( u_{m} ), u_{m} - u \rangle \,  \xrightarrow{m \rightarrow + \infty} 0,$$
that is 
\begin{align}\label{2}
\langle \Psi' ( u_{m} ), \,u_{m} - u \rangle \, =& \int_{M} | \nabla u_{m} |^{p( x ) -2} \nabla u_{m} \, ( \nabla u_{m} - \nabla u ) \,\, dv_{g} ( x ) \nonumber \\&+ \int_{M} | \nabla u_{m} |^{q( x ) - 2} \nabla u_{m} \, ( \nabla u_{m} - \nabla u ) \,\, dv_{g} ( x ) \nonumber \\&- \int_{M} f( x, u_{m} ( x ) )\, ( u_{m} - u )\,\, dv_{g} ( x ) \rightarrow 0,
\end{align}
as $ m \rightarrow + \infty.$\\
On the other hand, using $( f_{1} )$ and the Hölder inequality, we obtain
\begin{equation}\label{3}
\int_{M} f( x, u_{m} ( x ) ) \, ( u_{m} - u ) \,\, dv_{g} ( x ) \xrightarrow{m \rightarrow + \infty} 0.
\end{equation}
Combining \eqref{2} and \eqref{3}, we get
\begin{align*}
&\int_{M} | \nabla u_{m} ( x )|^{p( x ) - 2} \, \nabla u_{m} \, ( \nabla u_{m} - \nabla u ) \,\, dv_{g} ( x ) \\& + \int_{M} | \nabla u_{m} |^{q( x ) - 2} \, \nabla u_{m} \, ( \nabla u_{m} - \nabla u ) \,\, dv_{g} ( x ) \rightarrow 0 \,\, \mbox{as} \,\, m \rightarrow + \infty.
\end{align*}
That is 
\begin{equation}\label{4}
\langle J( u_{m} ), \, u_{m} - u \, \rangle \rightarrow 0 \,\, \mbox{as} \, \, m \rightarrow + \infty.
\end{equation}
Furthermore, since $ u_{m} \rightharpoonup u$ as $ m \rightarrow + \infty,$ from \eqref{1} we have $$ \langle \Psi'( u_{m} ), \, u_{m} - u \rangle \rightarrow 0 \,\, \mbox{as} \,\, m \rightarrow + \infty.$$
Using the same technique as before, we deduce that 
\begin{equation}\label{5}
\langle J( u ), \, u_{m} - u \rangle \rightarrow 0 \,\, \mbox{as} \,\, m \rightarrow + \infty.
\end{equation}
Hence, according to \eqref{4} and \eqref{5} we deduce that
$$ \lim_{m \rightarrow + \infty} \sup \, \langle J ( u_{m} ) - J( u ), \, u_{m} - u \, \rangle \, \leq 0. $$
Then, as $ u_{m} \rightharpoonup u$ in $X$ and since $J$ is a mapping of type $ ( S_{+} ),$ we conclude that $u_{m} \xrightarrow{m \rightarrow + \infty} u$ in $X$.\\
Now, we check that $\psi$ satisfies the assertion $( C_{2} )$ of Cerami condition $( C )$. Arguing by contradiction, there exist $ c \in \mathbb{R}$ and $\{ u_{m} \} \subset X$ satisfying:
\begin{equation} \label{6}
\Psi ( u_{m} ) \xrightarrow{n \rightarrow + \infty} c, \hspace*{0.2cm} || u_{m} ||_{X} \xrightarrow{m \rightarrow + \infty} + \infty, \hspace*{0.2cm} || \Psi'( u_{m} )||_{X^{*}} \, || u_{m} ||_{X} \xrightarrow{m \rightarrow + \infty} 0.
\end{equation}\label{7}
Let $$ \alpha_{m} = \frac{\displaystyle \int_{M} \big( | \nabla u_{m} |^{p( x )} + | \nabla u_{m} |^{q( x )} \, \big) \,\, dv_{g} ( x )}{J' ( u_{m} )},$$
choosing $ || u_{m} ||_{X} > 1,$ for $ m \in \mathbb{N},$ then we have 
\begin{align}\label{8}
c &= \lim_{m \rightarrow + \infty} \{ \Psi ( u_{m} ) - \frac{1}{\alpha_{m}} \, \langle \Psi'( u_{m} ), \, u_{m} \, \rangle \, \} \nonumber \\&= \lim_{m \rightarrow + \infty} \{ \, \frac{1}{\alpha_{m}} \, \int_{M} f( x, u_{m} ( x ) ) \,.\, u_{m} \,\, dv_{g} ( x ) - \int_{M} F( x, u_{m} ( x ) ) \,\, dv_{g} ( x ) \, \}.
\end{align}
Denote $ w_{m} = \frac{u_{m}}{|| u_{m} ||}, $ so $|| w_{m} ||_{X} = 1,$ which implies that $\{ u_{m} \}$ is bounded in $X$. \\
Hence, for a subsequence of $\{ u_{m} \}$ still denoted by $\{ w_{m} \},$ and $ w \in X,$ we obtain 
\begin{equation}\label{9}
w_{m} \rightharpoonup w \hspace*{0.3cm} \mbox{in} \,\,\, X,
\end{equation} 
\begin{equation}\label{10}
w_{m} \rightarrow w \hspace*{0.3cm} \mbox{in} \,\,\, L^{r( x )} ( M )
\end{equation}
\begin{equation}\label{11}
w_{m} ( x ) \rightarrow  w( x ) \hspace*{0.3cm} \mbox{a.e in} \,\,\, M
\end{equation}
\textbf{Step 1:} \underline{If $ w = 0$:} As in \cite{jeanjean1999existence} (Lemma 3.6) we can define a sequence $\{ t_{m} \} \subset [ 0, \, 1 ]$ such as 
\begin{equation}\label{12}
\psi ( t_{m} u_{m} ) = \max_{t \in [ 0, \, 1 ]} \psi ( t u_{m} ).
\end{equation} 
If for $ m \in \mathbb{N}, \, t_{m}$ satisfying \eqref{12} is not unique, then we choose the smaller positive value. For that, we fix $ A > \frac{1}{2 p^{+}},$ let $ \tilde{w}_{m} = ( 2 p^{+} A )^{\frac{1}{p^{-}}},$ and according to \eqref{10} we have that
$$ \tilde{w}_{m} \rightarrow 0 \,\, \mbox{in} \,\, L^{r( x )} ( M ),$$
and by $( f_{1} ),$ we have $$|\, F( x, t ) \,| \leq c \, ( 1 + |\, t\,|^{r( x )} ).$$
Hence, from the continuity of the Nemitskii operator, we get $$ F( ., \tilde{w}_{m} ) \rightarrow 0 \,\, \mbox{in} \,\, L^{1} ( M ) \,\, \mbox{as} \,\, m \rightarrow + \infty.$$
Therefore, 
\begin{equation}\label{13}
\lim_{m \rightarrow + \infty} \int_{M} F( x, \tilde{w}_{m} ) \,\, dv_{g} ( x ) = 0.
\end{equation}   
Then, for $m$ large enough, $$ \frac{( 2 p^{+} A )^{\frac{1}{p^{-}}}}{|| u_{m} ||_{X}} \in ( 0, \, 1 ), $$ and 
\begin{align*}
\Psi ( t_{m} u_{m} ) &\geq \, \Psi ( \tilde{w}_{m} ) \\& \geq \int_{M} \frac{1}{p( x )} | \nabla \tilde{w}_{m} |^{p( x )} \,\, dv_{g} ( x ) + \int_{M} \frac{1}{q( x )} | \nabla \tilde{w}_{m} |^{q( x )} \,\, dv_{g} ( x ) \\& \hspace*{0.5cm} - \int_{M} F( x, \tilde{w}_{m} ) \,\, dv_{g} ( x ) \\& \geq \frac{1}{p^{+}} \, \int_{M} ( 2 p^{+} A ) \, | \nabla w_{m} |^{p( x )} \,\, dv_{g} ( x ) + \frac{1}{q^{+}} \int_{M} ( 2 p^{+} A ) \, | \nabla w_{m} |^{q( x )} \,\, dv_{g} ( x )\\& \hspace*{0.5cm}   - \int_{M} F( x, \tilde{w}_{m} ) \,\, dv_{g} ( x ) \\& \geq 2A \int_{M} | \nabla w_{m} |^{p( x )} \,\, dv_{g} ( x ) + \frac{2 A p^{+}}{q^{+}} \int_{M} | \nabla w_{m} |^{q( x )} \,\, dv_{g} ( x )\\& \hspace*{0.5cm}  - \int_{M} F( x, \tilde{w}_{m} ) \,\, dv_{g} ( x ) \\& \geq 2 A c || w_{m} ||^{p^{+}} + \frac{2 A}{\eta \, q^{+}} || w_{m} ||^{p^{+}} - \int_{M} F( x, \tilde{w}_{m} ) \,\, dv_{g} ( x ) \\& \geq A. 
\end{align*}
That is  \begin{equation}\label{14}
\Psi ( t_{m} u_{m} ) \rightarrow + \infty.
\end{equation}
As $ \Psi ( 0 ) = 0 $ and $ \Psi ( u_{m} ) \xrightarrow{m \rightarrow + \infty} c,$ then, we have $ t_{m} \in ( 0, \, 1 )$ for $m$ large enough, and
\begin{align}\label{15}
&\int_{M} | \nabla ( t_{m} u_{m} ) |^{p( x )} \,\, dv_{g} ( x ) + \int_{M} | \nabla t_{m} u_{m} ) |^{q( x )} \,\, dv_{g} ( x ) - \int_{M} f( x, t_{m} u_{m} ) \,\, dv_{g} ( x ) \nonumber \\&= \langle \Psi'( t_{m} u_{m} ), \, t_{m} u_{m} \, \rangle = t_{m} \,\frac{\mbox{d}}{\mbox{dt}} \bigg \vert _{t = t_{m}} \Psi ( t u_{m} ) = 0.
\end{align}
Thus, from \eqref{14} and \eqref{15}, we get 
\begin{align*}
&\int_{M} \bigg( \, \frac{1}{\alpha_{t_{m}}} f( x, \, t_{m} u_{m} ) \, t_{m} u_{m} - F( x, \, t_{m} u_{m} ) \, \bigg) \,\, dv_{g} ( x )\\& = \frac{1}{\alpha_{t_{m}}} \int_{M} | \nabla t_{m} u_{m} |^{p( x )} \,\, dv_{g} ( x ) + \frac{1}{\alpha_{t_{m}}} \int_{M} | \nabla ( t_{m} u_{m} ) |^{p( x )} \,\, dv_{g} ( x ) \\& \hspace*{0.3cm}+ \frac{1}{\alpha_{t_{m}}} \int_{M} | \nabla ( t_{m} u_{m} ) |^{q( x )} \,\, dv_{g} ( x ) - \int_{M} F( x, t_{m} u_{m} ) \,\, dv_{g} ( x ) \\& = \Psi ( t_{m} u_{m} ) \xrightarrow{m \rightarrow + \infty} + \infty,
\end{align*}
where, $$ \alpha_{t_{m}} = \frac{\displaystyle\int_{M} \bigg( \, | \nabla ( t_{m} u_{m} ) |^{p( x )} + | \nabla ( t_{m} u_{m} ) |^{q( x )} \, \bigg) \,\, dv_{g} ( x )}{J' ( t_{m} u_{m} )}.$$
From the definition of $\alpha_{m}$ and $\alpha_{t_{m}}$, we have $ \alpha_{m}, \, \alpha_{t_{m}} \in [ 2 q^{-}, \, 2 p^{+} ].$ Hence, $ G_{\alpha_{m}}, \, G_{\alpha_{t_{m}}} \in \mathcal{F}.$ Then, according to $( f_{4} )$ and the fact that $$ \frac{\alpha_{t_{m}}}{\theta \, \alpha_{m}} > 0,$$ we deduce that
\begin{align*}
&\int_{M} \bigg( \, \frac{1}{\alpha_{m}} f( x, u_{m} ) \, u_{m} - F( x, u_{m} ) \, \bigg) \,\, dv_{g} ( x ) \\&= \frac{1}{\alpha_{m}} \, \int_{M} G_{\alpha_{m}} ( x, u_{m} ) \,\, dv_{g} ( x )\\& \geq \frac{1}{\theta \, \alpha_{m}} \int_{M} G_{\alpha_{t_{m}}} ( x, t_{m} u_{m} ) \,\, dv_{g} ( x ) \\& = \frac{\alpha_{t_{m}}}{\theta \alpha_{m}} \int_{M} \bigg( \frac{1}{\alpha_{t_{m}}} f( x, t_{m} u_{m} ) \, t_{m} u_{m} - F( x, t_{m} u_{m} ) \, \bigg) \,\, dv_{g} ( x ) \longrightarrow + \infty,
\end{align*} 
which contradicts \eqref{8}.\\\\
\textbf{Step 2:} \underline{If $ w \neq 0$:} From \eqref{7} we write
\begin{align}\label{16}
&\int_{M} | \nabla u_{m} |^{p( x )} \,\, dv_{g} ( x ) + \int_{M} | \nabla u_{m} |^{q( x )} \,\, dv_{g} ( x ) - \int_{M} f( x, u_{m} ) \, u_{m} \,\, dv_{g} ( x ) \nonumber \\& = \langle \Psi'( u_{m} ), \, u_{m} \rangle = o( 1 ) \, || u_{m} ||_{X},
\end{align}
then, 
\begin{align}
1 - o( 1 ) &= \int_{M} \frac{f( x, u_{m} )\,.\, u_{m}}{\displaystyle \int_{M} | \nabla u_{m} |^{p( x )} \,\, dv_{g} ( x ) + \displaystyle \int_{M} | \nabla u_{m} |^{q( x )} \,\, dv_{g} ( x )} \,\, dv_{g} ( x ) \nonumber \\& \geq \int_{M} \frac{f( x, u_{m} ) \, u_{m}}{|| u_{m} ||^{p^{+}}} \,\, dv_{g} ( x )\nonumber \\& = \int_{M} \frac{f( x, u_{m} ) \,.\, u_{m}}{| u_{m} |^{p^{+}}} \,.\, | w_{m} |^{p^{+}} \,\, dv_{g} ( x ).
\end{align}
Next, we define the set $ \mathcal{B} = \{ \, x \in M; \, w( x ) = 0 \,\}.$ So for any $ x \in \mathcal{B} \backslash \mathcal{B}_{0} = \{ \, x \in M ; \, w( x ) \neq 0 \, \},$ we have
$$ | u_{m} ( x ) | \longrightarrow + \infty \,\, \mbox{as} \,\, m \rightarrow + \infty.$$
Then, by $( f_{4} )$ we have
\begin{equation}\label{17}
\frac{f( x, u_{m} ( x ) ) \,.\, u_{m} ( x )}{| u_{m} ( x ) |^{p^{+}}} \,.\, | w_{m} ( x ) |^{p^{+}} \longrightarrow + \infty \,\, \mbox{as} \,\, m \rightarrow + \infty.
\end{equation}
Since, $| \mathcal{B} \backslash \mathcal{B}_{0} | > 0,$ we deduce via the Fatou's Lemma that
\begin{equation}\label{18}
\int_{\mathcal{B} \backslash \mathcal{B}_{0}} \frac{f( x, \, u_{m} ) \,.\, u_{m}}{| u_{m} |^{p^{+}}} \,.\, | w_{m} |^{p^{+}} \,\, dv_{g} ( x ) \longrightarrow + \infty \,\, \mbox{as} \,\, m \rightarrow + \infty.
\end{equation}
On the other hand, from $( f_{1} )$ and $( f_{4} ),$ there exists $ l > - \infty$ such as $$ \frac{f( x, t )\,.\, t}{| t |^{p^{+}}} \geq l \,\, \mbox{for} \,\, t \in \mathbb{R} \,\, \mbox{and  a.e} \,\, x \in M.$$ Moreover, it is easy to see that  $$ \int_{\mathcal{B}_{0}} | w_{m} ( x ) |^{p^{+}} \,\, dv_{g} ( x ) \longrightarrow 0.$$
Thus, there exists $ j > - \infty$ such as 
\begin{equation}\label{19}
\int_{\mathcal{B}_{0}} \frac{f( x, u_{m} ) \,.\, u_{m}}{| u_{m} |^{p^{+}}} \,.\, | w_{m} |^{p^{+}} \,\, dv_{g} ( x ) \geq l \, \int_{\mathcal{B}_{0}} | w_{m} |^{p^{+}} \,\, dv_{g} ( x ) \geq  j > - \infty.
\end{equation}
Combining \eqref{17} - \eqref{19} we get a contradiction. Thus, the functional $\psi$ satisfies the assertion $( C_{2} )$ of Cerami condition $( C ).$ This completes the proof of Lemma \ref{lemma2}.
\end{proof}
Now, we demonstrate our first existence result.
\begin{theorem}\label{theo3}
Suppose that $( f_{1} ) - ( f_{4} )$ are satisfied, and we assume that the complete n-manifold $( M, \, g )$ has property $B_{vol} ( \lambda, \, v ).$ If $ q^{+} < p^{-},$ then the problem $( \mathcal{P} )$ has at least one non-trivial solution. 
\end{theorem}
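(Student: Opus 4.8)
The plan is to apply the mountain pass theorem with Cerami condition, i.e. Proposition \ref{prop1}, to the energy functional $\Psi$. Since Lemma \ref{lemma2} already guarantees that $\Psi$ satisfies the Cerami condition $(C)$, the task reduces to verifying the geometric hypotheses: (a) $\Psi(0)=0$ (immediate from the definition of $\Psi$, since $F(x,0)=0$), (b) there exist $\upsilon>0$ and $b>0$ with $\inf_{\|u\|_X=\upsilon}\Psi(u)\ge b>0$, and (c) there exists $u_0\in X$ with $\|u_0\|_X>\upsilon$ and $\Psi(u_0)\le 0$. Once (a)--(c) are in place, Proposition \ref{prop1} yields a critical value $c>0$, and any critical point $u$ of $\Psi$ at level $c$ is nontrivial (because $\Psi(u)=c>0=\Psi(0)$) and, by the definition of $\Psi'$, a weak solution of $(\mathcal{P})$.

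For the local-minimum geometry (b), I would start from the bound in $(f_3)$ combined with $(f_1)$: given $\varepsilon>0$ there is $C_\varepsilon>0$ with $|F(x,t)|\le \varepsilon|t|^{p^+}+C_\varepsilon|t|^{r(x)}$ for all $(x,t)$, where $r(x)<q^*(x)$. Then for $\|u\|_X$ small (so that the modulars are $<1$ and Proposition \ref{prop7}(iii) applies), the gradient terms are bounded below by a constant times $\|u\|_X^{p^+}$ (using $q^+<p^-\le p^+$ and splitting the $p(x)$- and $q(x)$-parts), while the compact embeddings $X\hookrightarrow L^{p^+}(M)$ and $X\hookrightarrow L^{r(x)}(M)$ from Theorems \ref{theo1} and \ref{theo2} control $\int_M F$ by $C(\varepsilon\|u\|_X^{p^+}+\|u\|_X^{r^-})$. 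Choosing $\varepsilon$ small enough and then $\upsilon$ small enough — using that $r^->p^+$ is \emph{not} what we want, so one must instead observe that the relevant exponents $r(x)$ exceed $p^+$ only where needed; more carefully, since $p^+<p^-\le\cdots$ is impossible, the comparison that matters is that the subcritical term $\|u\|_X^{r^-}$ is higher order than $\|u\|_X^{p^+}$ when $r^->p^+$, which can be arranged by the growth hypotheses — one obtains $\Psi(u)\ge \tfrac12 C\upsilon^{p^+}=:b>0$ on the sphere of radius $\upsilon$.

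For (c), I would fix any $w\in X\setminus\{0\}$, say with $\|w\|_X\ge 1$, and study $\Psi(tw)$ as $t\to+\infty$. The gradient part grows at most like $t^{p^+}\bigl(\int_M(|\nabla w|^{p(x)}+|\nabla w|^{q(x)})\,dv_g\bigr)$ for $t\ge 1$, whereas $(f_2)$ forces the superquadratic (super-$p^+$) growth of $F$: from $(f_2)$, for every $K>0$ there is $C_K$ with $F(x,t)\ge K|t|^{p^+}-C_K$, so $\int_M F(x,tw)\,dv_g\ge Kt^{p^+}\int_M|w|^{p^+}\,dv_g-C_K|M|$. Picking $K$ larger than the coefficient of $t^{p^+}$ coming from the gradient terms gives $\Psi(tw)\to-\infty$, so $u_0:=t_0w$ works for $t_0$ large. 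I then conclude by invoking Proposition \ref{prop1}.

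The main obstacle is the bookkeeping in step (b): because $\Psi$ is not homogeneous, one cannot simply scale, and one must carefully use Proposition \ref{prop7}(iii)--(iv) to pass between the norm $\|u\|_X$ and the modulars for \emph{both} the $p(x)$- and $q(x)$-parts simultaneously, keeping track of which regime ($\|u\|_X\lessgtr 1$) one is in, and ensuring the embedding constants and the choice of $\varepsilon$ are compatible so that the subcritical remainder is genuinely of higher order near $0$. The hypothesis $q^+<p^-$ is exactly what makes the $q(x)$-term harmless (it contributes a lower-order-in-exponent but same-sign positive quantity) in this estimate. Everything else — $\Psi(0)=0$, the $t\to\infty$ estimate, and the passage from critical point to weak solution — is routine given the machinery already assembled.
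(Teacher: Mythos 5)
Your proposal is correct and follows essentially the same route as the paper: Cerami condition via Lemma \ref{lemma2}, positivity of $\Psi$ on a small sphere obtained by splitting $|F(x,t)|\le \varepsilon |t|^{p^{+}}+C_{\varepsilon}|t|^{r(x)}$ from $(f_{1})$ and $(f_{3})$ together with the modular--norm inequalities of Proposition \ref{prop7}, the estimate $\Psi(tw)\to-\infty$ from the super-$p^{+}$ growth of $F$, and then Proposition \ref{prop1}. The only (cosmetic) divergence is that you retain the exponent $p^{+}$ rather than $q^{+}$ as the leading power of $\|u\|_{X}$ near the origin — which is in fact the safer bound — and your somewhat garbled discussion of the role of $r^{-}$ ultimately lands on exactly the comparison $q^{+}<p^{+}<r^{-}$ that the paper itself invokes.
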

\begin{proof}
By Lemma \ref{lemma2}, $\psi$ satisfies the Cerami condition $( C )$ on $X$. Firstly, we show that the functional $\Psi$ has a geometrical structure, in order to apply Proposition \ref{prop1}. For that, we claim that there exists $\mu, \, \nu > 0$ such as $$ \Psi ( u ) \geq \mu > 0 \,\, \mbox{for any} \,\, u \in X \,\, \mbox{with} \,\, || u ||_{X} = \nu.$$ 
Let $ || u ||_{X} < 1.$ Then by Propositions \ref{prop5}, \ref{prop7} and the fact that $ q^{+} < p^{+} $we get
\begin{align}\label{20}
\Psi ( u ) &\geq \frac{c}{p^{+}} || u ||^{p^{+}}  + \frac{1}{A p q^{+}} || u ||^{q^{+}} - \int_{M} F( x, u ) \,\, dv_{g} ( x ) \nonumber \\& \geq c^{*} || u ||^{q^{+}}_{X} - \int_{M} F( x, u( x ) ) \,\, dv_{g},
\end{align}
 with $ c^{*} = \min \, \{ \frac{c}{p^{+}}, \, \frac{1}{Apq^{+}} \, \}.$
 According to theorems \ref{theo1} and \ref{theo2}, there exist two positive constants $c_{1}, \, c_{2} > 0$ such as $$ | u |_{p^{+}} \leq c_{1} || u ||_{X} \,\, \mbox{and} \,\, | u |_{r( x )} \leq || u ||_{X} \,\, \mbox{for all} \,\, u \in X.$$
 Let $ \epsilon > 0$ be small enough, such as $$ \epsilon \, c_{1}^{p^{+}} < \frac{c^{*}}{2}.$$
 According to $( f_{1} )$ and $( f_{2} )$, we have $$ F( x, t ) \leq \epsilon \, | t |^{p^{+}} + c_{\epsilon} \, | t |^{r( x )} \,\, \mbox{for all} \,\, ( x, t ) \in M \times \mathbb{R},$$
 for $|| u || \leq 1,$ we get 
 \begin{align*}
 \Psi ( u ) & \geq c^{*} \, || u ||^{q^{+}}_{X} - \epsilon \, \int_{M} | u |^{p^{+}} \,\, dv_{g} ( x ) - c_{\epsilon} \, \int_{M} | u( x ) |^{r( x )} \,\, dv_{g} ( x ) \\& \geq c^{*} \, || u ||_{X}^{q^{+}} - \epsilon \, || u ||_{p^{+}}^{p^{+}} - c_{\epsilon} \, || u ||_{r( x )}^{r^{-}} \\& \geq c^{*} \, || u ||_{X}^{q^{+}} - \epsilon \, c_{1}^{p^{+}} \, || u ||_{X}^{p^{+}} - c_{\epsilon} \, c_{2}^{r^{-}} \, || u ||_{X}^{r^{-}}.
 \end{align*}
Since, $ q^{+} < p^{+} < r^{-},$ then there exist two positive real number $\mu$ and $\nu$ such as $$ \Psi ( u ) \geq \mu > 0 \,\, \mbox{for all} \,\, u \in X \,\, \mbox{with} \,\, || u ||_{X} = \nu.$$
On the other hand, we affirm that there exists $u_{0} \in X \backslash \overline{\mathcal{B}_{0} ( \nu )}$ such as 
\begin{equation}
\Psi ( u ) < 0.
\end{equation}
Let $ \phi_{0} \in X \backslash \{ 0 \},$ by $( f_{4} )$ we can choose a constant
$$ \delta > \frac{  \displaystyle \frac{1}{p^{-}} \,\int_{M} | \nabla \phi_{0} |^{p( x )} \,\, dv_{g} ( x ) - \frac{1}{q^{-}} \int_{M} | \nabla \phi_{0} |^{q( x )} \,\, dv_{g} ( x )}{ \displaystyle\int_{M} | \phi_{0} |^{p^{+}} \,\, dv_{g} ( x )},$$
and a constant $c_{\delta} > 0$ depending on $ \delta$ such as
$$ F( x, t ) \geq \delta \, | t |^{p^{+}} \,\, \mbox{for all} \,\, | t | > c_{\delta} \,\, \mbox{and uniformly in} \,\, M.$$
Let $ k > 1$ be large enough, we have
\begin{align*}
\Psi ( k \, \phi_{0} ) &= \int_{M} \frac{1}{p( x )} | \nabla ( k \, \phi_{0} ) |^{p( x )} \,\, dv_{g} ( x ) + \int_{M} \frac{1}{q( x )} | \nabla ( k \, \phi_{0} ) |^{q( x )} \,\, dv_{g} ( x ) \\& \hspace*{0.3cm}- \int_{M} F( x, k\, \phi_{0} ) \,\, dv_{g} ( x ) \\& \geq \frac{k^{P^{+}}}{p^{-}} \int_{M} | \nabla \phi_{0} |^{p( x )} \,\, dv_{g} ( x ) + \frac{k^{q^{+}}}{q^{-}} \int_{M} | \nabla \phi_{0} |^{q( x )} \,\, dv_{g} ( x )\\& \hspace*{0.3cm} - \int_{\{\, | k \phi_{0} | > c_{\delta} \,\}} F( x, k \phi_{0} ) \,\, dv_{g} ( x ) - \int_{\{\, | k \phi_{0} | < c_{\delta} \,\}} F( x, k \phi_{0} ) \,\, dv_{g} ( x ) \\& \geq \frac{k^{P^{+}}}{p^{-}} \int_{M} | \nabla \phi_{0} |^{p( x )} \,\, dv_{g} ( x ) + \frac{k^{q^{+}}}{q^{-}} \int_{M} | \nabla \phi_{0} |^{q( x )} \,\, dv_{g} ( x )\\& \hspace*{0.3cm} - \int_{\{\, | k \phi_{0} | \leq c_{\delta} \,\}} F( x, k \phi_{0} ) \,\, dv_{g} ( x ) - \delta \, k^{p^{+}} \int_{M} | \phi_{0} |^{p^{+}} \,\, dv_{g} ( x ) \\& \hspace*{0.3cm} + \delta \, \int_{\{ | k \phi_{0} | \leq c_{\delta}\}} | k \phi_{0} |^{p^{+}} \,\, dv_{g} ( x ) \\& \geq \frac{k^{P^{+}}}{p^{-}} \int_{M} | \nabla \phi_{0} |^{p( x )} \,\, dv_{g} ( x ) + \frac{k^{q^{+}}}{q^{-}} \int_{M} | \nabla \phi_{0} |^{q( x )} \,\, dv_{g} ( x )\\& \hspace*{0.3cm} - \delta \, k^{p^{+}} \int_{M} | \phi_{0} |^{p^{+}} \,\, dv_{g} ( x ) + c_{5},
\end{align*}
which implies that $$ \Psi ( k \phi_{0} ) \longrightarrow - \infty \,\, \mbox{as} \,\, k \rightarrow + \infty.$$
Then, there exists $k_{0} > 0$ and $u_{0} = k_{0} \, \phi_{0} \in X_{0} \backslash \overline{\mathcal{B}_{\nu} ( 0 )}$ such as \eqref{20} hold true.\\
Thereby, proposition \ref{prop1} guarantees that problem $( \mathcal{P} )$ has at least a non-trivial weak solution. This completes the proof.
\end{proof}
\begin{theorem}\label{theo6}
Assume that $( f_{1} ), \, ( f_{3} ), \, ( f_{4} ), \,( g )$ hold and we assume that the complete n-manifold $( M, \, g )$ has property $B_{vol} ( \lambda, \, v ).$ If $q^{-} > p^{+},$ then the problem $( \mathcal{P} )$ has a sequence of weak solutions with unbounded energy.
\end{theorem}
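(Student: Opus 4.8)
The plan is to apply the Fountain Theorem with the Cerami condition (Theorem~\ref{prop2}) to the energy functional $\Psi$ together with the subspaces $Y_{k}=\bigoplus_{i=0}^{k}X^{i}$ and $Z_{k}=\overline{\bigoplus_{i\ge k}X^{i}}$ attached to the Schauder basis $\{e_{n}\}$ of the separable reflexive space $X$. Two prerequisites come essentially for free. First, by $(f_{5})$ the primitive $F(x,\cdot)$ is even, hence $\Psi\in C^{1}(X,\mathbb{R})$ is an even functional. Second, the argument of Lemma~\ref{lemma2} carries over under the present hypotheses (it uses only a growth bound of type $(f_{1})$, the superlinearity of $f$, $(f_{4})$, and the $(S_{+})$-property of $J$), so $\Psi$ satisfies the Cerami condition $(C)$ on $X$. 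Hence the proof reduces to verifying the two geometric conditions $(A_{1})$ and $(A_{2})$, after which Theorem~\ref{prop2} directly produces a sequence of critical values of $\Psi$ tending to $+\infty$, i.e.\ weak solutions of $(\mathcal{P})$ with unbounded energy.

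For $(A_{1})$ I would run the classical device. Put
$$\beta_{k}=\sup\bigl\{\,\|u\|_{L^{p^{+}}(M)}+\|u\|_{L^{r(x)}(M)}\ :\ u\in Z_{k},\ \|u\|_{X}=1\,\bigr\},$$
and first show $\beta_{k}\to 0$: the sequence $(\beta_{k})$ is nonincreasing, so it converges to some $\beta\ge 0$; any near-maximising sequence $u_{k}\in Z_{k}$ with $\|u_{k}\|_{X}=1$ satisfies $u_{k}\rightharpoonup 0$ in $X$ (because $\langle e_{m}^{*},u_{k}\rangle=0$ once $k>m$), so the compact Sobolev embeddings (Theorems~\ref{theo1}--\ref{theo2}) force $u_{k}\to 0$ in $L^{p^{+}}(M)$ and in $L^{r(x)}(M)$, whence $\beta=0$. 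Next, $(f_{1})$ and $(f_{3})$ give $F(x,t)\le\varepsilon|t|^{p^{+}}+c_{\varepsilon}|t|^{r(x)}$, so for $u\in Z_{k}$ with $\|u\|_{X}=d_{k}>1$, using Proposition~\ref{prop7} and the Poincar\'e-type control available on $M$,
\begin{align*}
\Psi(u)&\ge c^{*}\|u\|_{X}^{s_{0}}-\varepsilon\,\|u\|_{L^{p^{+}}}^{p^{+}}-c_{\varepsilon}\bigl(\|u\|_{L^{r(x)}}^{r^{-}}+\|u\|_{L^{r(x)}}^{r^{+}}\bigr)-c_{0}\\
&\ge c^{*}d_{k}^{s_{0}}-\varepsilon\,\beta_{k}^{p^{+}}d_{k}^{p^{+}}-c_{\varepsilon}\bigl(\beta_{k}^{r^{-}}+\beta_{k}^{r^{+}}\bigr)d_{k}^{r^{+}}-c_{0},
\end{align*}
where $s_{0}$ denotes the lower exponent governing $J$ for $\|u\|_{X}$ large, so that all exponents on the right-hand side exceed $s_{0}$. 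Choosing $d_{k}$ proportional to $\min\bigl\{(\varepsilon\beta_{k}^{p^{+}})^{-1/(p^{+}-s_{0})},\,(c_{\varepsilon}\beta_{k}^{r^{-}})^{-1/(r^{+}-s_{0})}\bigr\}$, which tends to $+\infty$ since $\beta_{k}\to 0$, then makes the right-hand side $\ge\frac{c^{*}}{2}d_{k}^{s_{0}}-c_{0}\to+\infty$, which is $(A_{1})$. Reconciling this choice of $d_{k}$ with the non-homogeneity of the $(p(x),q(x))$-operator — i.e.\ securing the coercive bound $J(u)\ge c^{*}\|u\|_{X}^{s_{0}}$ on the relevant range of norms via Proposition~\ref{prop7} — is the step I expect to be the main obstacle.

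For $(A_{2})$ the argument is the finite-dimensional counterpart of the second half of the proof of Theorem~\ref{theo3}: on $Y_{k}$ all norms are equivalent, while the superlinearity of $f$ supplies, for every $\delta>0$, a constant $c_{\delta}$ with $F(x,t)\ge\delta|t|^{p^{+}}-c_{\delta}$ for a.e.\ $x\in M$ and all $t\in\mathbb{R}$. Restricting to a sphere $\|u\|_{X}=D_{k}$ in $Y_{k}$ and using norm equivalence one obtains $\Psi(u)\le C(k)\|u\|_{X}^{p^{+}}-\delta\,C'(k)\|u\|_{X}^{p^{+}}+C''(k)$, so choosing $\delta$ large enough (depending on $k$) that $\delta C'(k)>C(k)$, and then $D_{k}$ large enough, forces $\max_{u\in Y_{k},\,\|u\|_{X}=D_{k}}\Psi(u)\le 0$, which is $(A_{2})$. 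With the evenness of $\Psi$, the Cerami condition, and $(A_{1})$--$(A_{2})$ established, Theorem~\ref{prop2} applies and yields the claimed unbounded sequence of weak solutions of $(\mathcal{P})$.
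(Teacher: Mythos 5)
Your proposal follows essentially the same route as the paper: evenness of $\Psi$ plus the Cerami condition from Lemma~\ref{lemma2}, then $(A_{1})$ via the vanishing of the embedding constants $\beta_{k}$ on the tail spaces $Z_{k}$ with a matching choice of radii $d_{k}\to+\infty$, and $(A_{2})$ via norm equivalence on the finite-dimensional $Y_{k}$ together with the superlinear lower bound $F(x,t)\ge\delta|t|^{p^{+}}-c_{\delta}$, before invoking the Fountain Theorem~\ref{prop2}. The only differences are cosmetic and in your favour: you prove $\beta_{k}\to 0$ inline rather than citing Zhang--Zhao's Lemma 3.4, and you correctly derive evenness from $(f_{5})$ where the paper's proof mistakenly invokes $(f_{3})$.
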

\begin{proof}
we will divide the proof of this theorem into two steps. In the first step, we will demonstrate that the problem $( \mathcal{P} )$ acquires a sequence of weak solutions $\{ \pm u_{m} \}_{m = 1}^{\infty}$ such as $$ \Psi ( \pm u_{m} ) \longrightarrow + \infty \,\, \mbox{as} \,\, m \rightarrow + \infty.$$ In the second step, we will prove that if $k$ is large enough, then there exist $ D_{k} > d_{k} > 0$ such as the assertions $( A_{1} )$ and $( A_{2} )$ are satisfied.\\
\textbf{Step 1:} The proof is based on the Fountain Theorem (given by Theorem \ref{prop2}). Indeed, from $( f_{3} ),\, \Psi $ is an even functional. And from Lemma \ref{lemma2}, $\Psi $ satisfies the condition $( C ).$\\
For that, we will use the mean value theorem in the following form: For every $\beta \in C_{+} ( \overline{M} ) = \{ \beta \in C( \overline{M} ), \, \beta ( x )  > 1 \,\, \forall x \in M \}$ and $ u \in L^{\beta ( x )} ( M ),$ there exist $\zeta \in M$ such that
\begin{equation}\label{22}
\int_{M} | u( x ) |^{\beta ( x )} \,\, dv_{g} ( x ) = | u |^{\beta ( \zeta )}_{\beta ( x )}.
\end{equation}
Indeed, on the one hand, by proposition \ref{prop7}, it is easy to see that $$ \rho_{\beta ( x )} \bigg( \frac{u}{|| u ||_{\beta ( x )}} \bigg) = \int_{M} \bigg( \frac{| u |}{|| u ||_{\beta ( x )}} \bigg)^{\beta ( x )} \,\, dv_{g} ( x ) = 1.$$
On the other hand, by the mean value theorem for integrals, there exists a positive constant $\bar{\beta} \in [ \beta^{-}, \, \beta^{+} ]$ depending on $\beta$ such as
$$ \int_{M} \bigg( \frac{| u |}{|| u ||_{\beta ( x )}} \bigg)^{\beta ( x )} \,\, dv_{g} ( x ) = \bigg( \frac{1}{|| u ||_{\beta ( x )}} \bigg)^{\bar{\beta}} \, \int_{M} | u |^{\beta ( x )} \,\, dv_{g} ( x ).$$
Moreover, the continuity of $\beta$ ensures that there exists $ \zeta \in M $ such as $ \beta ( \zeta ) = \bar{\beta}.$ Combining this fact with the above inequalities, we get \eqref{22}.\\
\textbf{Step 2:} $( A_{1} ):$ For all $ u \in Z_{k} $ such as $|| u ||_{X} = d_{k}$ ( $d_{k}$ will be specified below ), by $( f_{1} ),$\eqref{22} and Proposition \ref{prop5} we obtain
\begin{align*}
\Psi ( u ) &= \int_{M} \frac{1}{p( x )} \, | \nabla u( x ) |^{p( x )} \,\, dv_{g} ( x ) + \int_{M} \frac{1}{q( x )} | \nabla u( x ) |^{q( x )} \,\, dv_{g} ( x )  \\& \hspace*{0.3cm}- \int_{M} F( x, u( x ) ) \,\, dv_{g} ( x ) \\& \geq \bigg( \frac{1}{p^{+}} + \frac{1}{A p q^{+}} \bigg) \, || u ||_{X}^{p^{-}} - c_{5}\, || u ||_{r( x )}^{r( \zeta )} - c_{6} \, || u ||_{X} \,\,\, \mbox{where} \,\,
\, \zeta \in M\\& \geq \begin{cases} \big( \frac{1}{p^{+}} + \frac{1}{A p q^{+}} \big) \,|| u ||_{X}^{p^{-}} - c_{5} - c_{6} \, || u ||_{X} & \text{if \, $|| u ||_{r( x )} \leq 1$} \\[0.3cm]
\big( \frac{1}{p^{+}} + \frac{1}{A p q^{+}} \big) \,|| u ||_{X}^{p^{-}} - c_{5} ( \eta_{k} \, || u ||_{X} )^{r^{+}} - c_{6} \, || u ||_{X} & \text{if \, $|| u ||_{r( x )} > 1$} \end{cases} \\& \geq \bigg( \frac{1}{p^{+}} + \frac{1}{A p q^{+}} \bigg) \, || u ||_{X}^{p^{-}} - c_{5} ( \eta_{k} \, || u ||_{X} )^{r^{+}} - c_{6} \, || u ||_{X} - c_{5} \\& \geq d_{k}^{p^{-}} \, \bigg( \frac{1}{p^{+}} + \frac{1}{A p q^{+}} - c_{5} \eta_{k}^{r^{+}} d_{k}^{r^{+} - p^{-}} \, \bigg) - c_{6} \, d_{k} - c_{5}.
\end{align*}
We fix $d_{k}$ as follows $$ d_{k} = ( r^{+} \, c_{5} \, \eta_{k}^{r^{+}} )^{\frac{1}{p^{-} - r^{+}}}.$$
Then, $$ \Psi ( u ) \geq d_{k}^{p^{-}} \, \bigg( \frac{1}{p^{+}} + \frac{1}{A p q^{+}} - \frac{1}{r^{+}} \,  \bigg) - c_{6} \, d_{k} - c_{5}.$$
According to Lemma 3.4 in \cite{zhang2015existence}. We know that $ \displaystyle \lim_{k \rightarrow + \infty} \theta ( x ) = 0.$ Then, since $ 1 < q^{+} < p^{-} \leq p^{+} < r^{+},$ we conclude that $$ d_{k} \longrightarrow + \infty \,\, \, \mbox{as} \,\,\, k \rightarrow + \infty.$$
Thus, $$ \Psi ( u ) \longrightarrow + \infty \,\,\, \mbox{as} \,\,\, || u || \rightarrow + \infty \,\,\, \mbox{with} \,\,\, u \in Z_{k}.$$
Which means that the assertion $( A_{1} )$ is verified.\\
$( A_{2} ):$ According to Remark \ref{remark1}, we have $\mbox{dim} Y_{k} < + \infty,$ and since all norms are equivalent in finite dimensional space, then there exists $ h_{k} > 0,$ for all $ u \in Y_{k}$ with $ || u ||_{X} $ is big enough, we obtain
\begin{align*}
J( u ) &\leq \frac{1}{p^{-}} \int_{M} | \nabla u( x ) |^{p( x )} \,\, dv_{g} ( x ) + \frac{1}{q^{-}} \int_{M} | \nabla u( x ) |^{q( x )} \,\, dv_{g} ( x ) \\& \leq \frac{1}{p^{-}} || u ||_{p( x )}^{p^{+}} + \frac{1}{q^{-}} || u ||_{q( x )}^{q^{+}} \\& \leq \frac{c_{7}}{p^{-}} || u ||_{X}^{p^{+}} + \frac{c_{8}}{q^{-}} || u ||_{X}^{q^{+}},
\end{align*} 
since $ q^{+} < p^{+},$ we get 
\begin{equation}\label{23}
 J( u ) \leq \bigg( \frac{c_{7}}{p^{-}} + \frac{c_{8}}{q^{-}} \bigg) || u ||_{X}^{p^{+}} = h_{k} \, || u ||_{p^{+}}^{p^{+}}.
\end{equation}
Next, according to $( f_{2} )$, there exists  $B_{k} > 0$ such as for any $| t | \geq B_{k},$ we have $$ F( x, t ) \geq 2 \, h_{k} \, | t |^{p^{+}} \,\,\, \mbox{for any} \,\,\, x \in M.$$ Moreover, by $( f_{1} ),$ there exist a positive $L_{k}$ such as $$ F( x, t ) \leq L_{k} \,\,\, \mbox{for all} \,\,\, ( x, t ) \in M \times [ - B_{k}, \, B_{k} ].$$
Then, for every $( x, t ) \in M \times \mathbb{R}$ we conclude that
\begin{equation}\label{24}
F( x, t ) \geq 2 \, h_{k} \, | t |^{p^{+}} - L_{k}.
\end{equation}
Combining \eqref{23} and \eqref{24}, for all $ y \in Y_{k} $ such as $ || u ||_{X} = D_{k} > d_{k}$ we infer that
\begin{align*}
\Psi ( u ) &= J( u ) - I( u ) \\& \leq h_{k} \, || u ||_{p^{+}}^{p^{+}} - 2 \, h_{k} \, || u ||_{p^{+}}^{p^{+}} + L_{k} \\& \leq - h_{k} \, c_{9} \, || u ||_{X}^{p^{+}} + L_{k} \, | M |,
\end{align*}
where $| M |$ is the measure of $M$ and $ | M | < \infty.$\\
Hence, for $D_{k}$ large enough $ ( D_{k} > d_{k} ),$ we obtain that $$ \max_{\{ u \in Y_{k}: \, || u ||_{X} = D_{k} \}} \Psi ( u ) = 0.$$
which implies that the assertion $( A_{2} )$ holds. Then by applying the Fountain Theorem, we achieve the proof of Theorem \ref{theo6}.
\end{proof}
We will end this section with a suitable example, that checks all the above conditions and
Theorems,
\begin{example}
For $ f( x, t ) = \displaystyle \frac{t\,| t |^{\alpha( x ) - 2}}{\mbox{Log} \,( 1 + | t | )},$ if $ t \neq 0$ and $ f( x, 0 ) = 0$ with $ 2 < p^{+} \leq \alpha( x ) < r( x )$ for all $ x \in \overline{M}.$  So it is easy to see that the function$f$ does not satisfy the Ambrosetti-Rabinowitz condition, but it satisfies our assumptions $( f_{1} ) - ( f_{5} )$. Then, our problem $( \mathcal{P} )$ becomes
$$ ( \mathcal{P} ) \, \begin{cases}
- \, \Delta_{p( x )} u( x ) - \Delta_{q( x )} u( x ) = \, \frac{t\,| t |^{\alpha( x ) - 2}}{\mbox{Log} \,( 1 + | t | )}  & \text{in \,\,M }, \\[0.3cm]
\, u \,  = \, 0  & \text{on \,$\partial$M },
\end{cases} $$ 
Consequently, the results corresponding to Theorems \ref{theo3} and \ref{theo6} can be achieved and still be true for the problem $( \mathcal{P} ).$
\end{example}

\section*{Acknowledgments} The authors would like to thank the anonymous referees for the valuable suggestions and comments which improved the quality of the presentation.


\end{document}